\numberwithin{equation}{section}
\newtheorem{theorem}{Theorem}[section]
\newtheorem{lemma}[theorem]{Lemma}
\newtheorem{proposition}[theorem]{Proposition}
\newtheorem{corollary}[theorem]{Corollary}
\theoremstyle{definition}
\newtheorem{definition}[theorem]{Definition}
\newtheorem{example}[theorem]{Example}
\theoremstyle{remark}
\newtheorem{remark}[theorem]{Remark}
\newcommand{\C}{\mathbb{C}}
\newcommand{\Sp}{\mathbb{S}}
\newcommand{\Q}{\mathbb{Q}}
\newcommand{\R}{\mathbb{R}}
\newcommand{\T}{\mathbb{T}}
\newcommand{\Z}{\mathbb{Z}}
\newcommand{\N}{\mathbb{N}}
\newcommand{\K}{\mathbb{K}}
\newcommand{\SU}{\operatorname{SU}}
\newcommand{\SL}{\operatorname{SL}}
\newcommand{\Rank}{\operatorname{rank}}
\newcommand{\co}{\colon\thinspace}
\newcommand{\Hilb}{\operatorname{Hilb}}
\newcommand{\on}{\mathit{on}}
\newcommand{\off}{\mathit{off}}
\begin{document}

\title{Hilbert series of symplectic quotients by the $2$-torus}

\author[H.-C.~Herbig]{Hans-Christian Herbig}
\address{
Departamento de Matem\'{a}tica Aplicada,
Universidade Federal do Rio de Janeiro,
Av. Athos da Silveira Ramos 149,
Centro de Tecnologia - Bloco C,
CEP: 21941-909 - Rio de Janeiro, Brazil}
\email{herbighc@gmail.com}

\author[D.~Herden]{Daniel Herden}
\address{
Department of Mathematics,
Baylor University,
Sid Richardson Building,
1410 S.4th Street,
Waco, TX 76706, USA}
\email{daniel\_herden@baylor.edu}

\author[C.~Seaton]{Christopher Seaton}
\address{
Department of Mathematics and Computer Science,
Rhodes College,
2000 N. Parkway,
Memphis, TN 38112, USA}
\email{seatonc@rhodes.edu}

\thanks{
This work was supported by a Collaborate@ICERM grant from the Institute for Computational and Experimental Research in Mathematics (ICERM).
C.S. was supported by the E.C.~Ellett Professorship in Mathematics.
H.-C.H. was supported by CNPq through the \emph{Plataforma Integrada Carlos Chagas}.}

\keywords{Hilbert series, symplectic reduction, torus action, $2$-torus}
\subjclass[2010]{Primary 53D20; Secondary 13A50, 14L30.}

\begin{abstract}
We compute the Hilbert series of the graded algebra of real regular functions
on a linear symplectic quotient by the $2$-torus as well as the first four coefficients of the
Laurent expansion of this Hilbert series at $t = 1$. We describe an algorithm to compute the
Hilbert series as well as the Laurent coefficients in explicit examples.
\end{abstract}

\maketitle
\tableofcontents


\section{Introduction}
\label{sec:Intro}

Let $V$ be a finite-dimensional unitary representation of a compact Lie group $G$. The action of $G$
on the underlying real symplectic manifold of $V$ is Hamiltonian and admits a homogeneous quadratic
moment map. The \emph{symplectic quotient} $M_0$ at the zero level of this moment map is usually singular
but has the structure of a \emph{symplectic stratified space}, i.e., is stratified into smooth symplectic
manifolds; see \cite{SjamaarLerman}. The Poisson algebra of smooth functions on $M_0$ has an $\N$-graded
Poisson subalgebra $\R[M_0]$ of \emph{real regular functions} on $M_0$, the polynomial functions on $M_0$
as a semialgebraic set.

This paper continues a program to compute the Hilbert series of $\R[M_0]$ for various choices of $G$ with
particular attention to the first few coefficients of the Laurent expansion of the Hilbert series around
$1$, here denoted $\gamma_0, \gamma_1, \ldots$. The case when $G = \Sp^1$ is the circle was handled in
\cite{HerbigSeaton}, the case $G = \SU_2$ was treated in \cite{HerbigHerdenSeatonSU2}, and analogous
computations for the Hilbert series of the algebras of off-shell (i.e. classical) invariants were given
in \cite{CowieHerbigSeatonHerden,CayresPintoHerbigHerdenSeaton}.
Here, we consider the case $G = \T^2$, the first step towards understanding those cases where
$\operatorname{rank} G > 1$.

The Hilbert series and its first two Laurent coefficients have played an important role in the study of
classical invariants. Hilbert first computed $\gamma_0$ for irreducible representations of $\SL_2$ in
\cite{Hilbert}, and computations of the Hilbert series or its Laurent coefficients in this case have been
considered by several authors; see for example
\cite{SylvesterFranklin,SpringerInvThryBook,Springer,Brion,LittelmannProcesi,BedratyukPoincareCovariants,
BedratyukBivarPoincare,BedratyukSL2Invariants,BedratyukIlashCovariants,IlashPoincareNLinForm,IlashPoincareNQuadForm}.
When $G$ is finite, it is well known that the first two Laurent coefficients are determined by the
order of $G$ and the number of pseudoreflections it contains; see \cite[Lemma~2.4.4]{SturmfelsBook}.
The meanings of the $\gamma_m$ more generally have been investigated in \cite{AvramovBuchweitzSally} and
\cite[Chapter~3]{PopovBook}.

For symplectic quotients, the Hilbert series continues to be a valuable tool for understanding the
graded algebra of regular functions. Certain properties of a graded algebra, such as Cohen-Macaulayness and
Gorensteinness, can be verified using the Hilbert series \cite{StanleyHilbFunGradAlg}, and this has been
used to check the Gorenstein property for symplectic quotients in \cite{HerbigHerdenSeaton,HerbigSchwarzSeaton2}.
Additionally, the Hilbert series has been used to distinguish between symplectic quotients that are not (graded
regularly) symplectomorphic \cite{FarHerSea,HerbigSeaton2}, and as a heuristic to identify potentially
symplectomorphic symplectic quotients \cite{HerbigLawlerSeaton}.

After reviewing the framework and relevant background information in Section~\ref{sec:Background}, we turn
to the computation of the Hilbert series in Section~\ref{sec:HilbSer}.
The first main result of this paper is Corollary~\ref{cor:HilbDegenerate}, giving a formula for the Hilbert
series of $\R[M_0]$ corresponding to an arbitrary $\T^2$-representation in terms of the weight matrix $A$.
This result is stated in terms of the Hilbert series $\Hilb_A^{\on}(t)$ of an algebra that does not always
coincide with the regular functions $\R[M_0]$ on the symplectic quotient $M_0$ of the representation with weight
matrix $A$ and assumes that $A$ is in a specific \emph{standard form}. However, there is no loss of generality;
we explain in Section~\ref{subsec:T2Reps0} that $\R[M_0]$ can always be computed as $\Hilb_A^{\on}(t)$ for some $A$,
and in Section~\ref{subsec:T2Reps} that $A$ can always be put in standard form with no change to $\Hilb_A^{\on}(t)$.
This approach greatly simplifies the computations in Section~\ref{subsec:HilbGeneric}.
The formula for $\Hilb_A^{\on}(t)$ takes its simplest form in Theorem~\ref{thrm:HilbGeneric} with additional
hypotheses on the representation that are described in Section~\ref{subsec:T2Reps}. The formula suggests a
(not particularly fast) algorithm that we describe in Section~\ref{subsec:Algorithm}.

In Section~\ref{sec:Gammas}, we turn to the computation of the first four Laurent coefficients, which are
given in Theorems~\ref{thrm:Gam0} and \ref{thrm:Gam2}. These computations require results of Smith
\cite{SmithLinCongr} on the number of solutions of a system of linear congruences, which we recall in
Section~\ref{subsec:LinCongruence}. As in the case of $G = \Sp^1$, the resulting formulas have
singularities when certain triples of vectors associated to the columns of the weight matrix are
collinear, in which case we call the weight matrix \emph{degenerate}. We provide a general explanation
for the removability of those singularities in Section~\ref{sec:Gammas} and detail explicit computations
to indicate the nature of the cancellations for the lowest-degree coefficient in Section~\ref{subsec:Cancellations}.
We expect that the numerators of the resulting rational functions admit combinatorial descriptions in terms of some
sort of generalization of Schur polynomials, and such a description would yield closed form expressions for the
Laurent coefficients in the degenerate case. We hope that this paper leads to progress in this direction. Finally,
in Section~\ref{subsec:GamAlg}, we briefly describe methods we have used to efficiently compute the first Laurent
coefficient in the presence of these singularities.


\section*{Acknowledgements}

We express appreciation to the Institute for Computational and Experimental Research in Mathematics (ICERM), Herbig and Seaton express appreciation to Baylor University, and Herden and Seaton express appreciation to the Instituto de Matem\'{a}tica Pura e Aplicada (IMPA) for hospitality during the work contained in this manuscript. Herbig thanks CNPq for financial support. We would also like to thank Anne-Katrin Gallagher for helpful discussion and responses to questions.


\section{Background}
\label{sec:Background}


\subsection{Symplectic quotients associated to representations of $\T^2$}
\label{subsec:T2Reps0}

In this section, we give a concise summary of the construction and relevant background for symplectic quotients
by the $2$-torus. The reader is referred to \cite{HerbigSeaton} for more details; see also
\cite{FarHerSea,HerbigIyengarPflaum,HerbigSeaton2}.

Throughout this paper, we fix the compact Lie group $\T^2$ and consider finite-dimensional unitary
representations $V \simeq \C^n$ of $\T^2$. Such a representation can be described by a \emph{weight matrix}
\[
    A   =   \begin{pmatrix}
                a_{11}  &   a_{12}  &   \cdots  &   a_{1n}  \\
                a_{21}  &   a_{22}  &   \cdots  &   a_{2n}
            \end{pmatrix}
            \in\Z^{2\times n},
\]
where the action of $(z_1,z_2)\in\T^2$ on $(x_1,x_2,\ldots,x_n)\in\C^n$ is given by
\[
    (z_1,z_2)\cdot(x_1,x_2,\ldots,x_n)
    =
    \big( z_1^{a_{11}}z_2^{a_{21}} x_1, z_1^{a_{12}}z_2^{a_{22}} x_2,\ldots,
            z_1^{a_{1n}}z_2^{a_{2n}} x_n \big).
\]
We will often use $V_A$ to indicate that $V$ is the representation with weight matrix $A$, or
simply $V$ when $A$ is clear from the context.
The representation is faithful if and only if $A$ has rank $2$ and the $\gcd$ of the $2\times 2$ minors of
$A$ is equal to $1$; \cite[Lemma~1]{FarHerSea}. Applying to $A$ elementary row operations that are invertible over
$\Z$ corresponds to changing the basis of $\T^2$ and hence does not change the representation.
Note that the $\T^2$-action on $V$ extends to a $(\C^\times)^2$-action with the same description.

With respect to the underlying real manifold of $V$ and symplectic structure compatible with the complex structure,
the action of $\T^2$ is Hamiltonian, and identifying the Lie algebra
$\mathfrak{g}$ of $\T^2$ (and hence its dual) with $\R^2$, the moment map $J^A\co V\to\mathfrak{g}^\ast$ (denoted
$J$ when $A$ is clear from the context) is given by
\[
    J_i^A(x_1,\ldots,x_n) = \frac{1}{2}\sum\limits_{j=1}^n a_{ij} x_j \overline{x_j},
    \quad i=1,2.
\]
The real $\T^2$-invariant variety $Z = Z_A = J^{-1}(\mathbf{0}) \subset V$ is called the \emph{shell}, and the
\emph{symplectic quotient} is the space $M_0 = M_0^A = Z/\T^2$. The symplectic quotient has
a smooth structure given by the Poisson algebra
$\mathcal{C}^\infty(M_0) = \mathcal{C}^\infty(V)^{\T^2}/\mathcal{I}_Z^{\T^2}$,
where $\mathcal{C}^\infty(V)^{\T^2}$ denotes the $\T^2$-invariant smooth $\R$-valued functions on $V$, $\mathcal{I}_Z$
is the ideal of $\mathcal{C}^\infty(V)$ of functions vanishing on $Z$, and
$\mathcal{I}_Z^{\T^2} = \mathcal{I}_Z\cap\mathcal{C}^\infty(V)^{\T^2}$. Equipped with this structure, the symplectic
quotient $M_0$ has the structure of a \emph{symplectic stratified space}, see \cite{SjamaarLerman}.

The algebra $\mathcal{C}^\infty(M_0)$ contains an $\N$-graded Poisson subalgebra $\R[M_0]$ of
real regular functions on $M_0$, whose construction we now describe.
Let $\R[V]^{\T^2}$ denote the graded algebra of $\T^2$-invariant polynomials over $\R$ on $V$.
For emphasis, we will refer to $\R[V]^{\T^2}$ as the \emph{algebra of off-shell invariants}.
After tensoring with $\C$, $\R[V]^{\T^2} \otimes_\R \C$ is isomorphic to
$\C[V\oplus V^\ast]^{\T^2} = \C[V\oplus V^\ast]^{(\C^\times)^2}$, where $V^\ast$ denotes the dual representation;
letting $(y_1,\ldots,y_n)$ denote coordinates for $V^\ast$ dual to the coordinates $(x_1,\ldots,x_n)$, $V$
is the subset of $V\oplus V^\ast$ given by $y_i = \overline{x_i}$ for each $i$.
The weight matrix of the representation $V\oplus V^\ast$ is given by $(A|-A)$, corresponding to the
\emph{cotangent lift} of the original representation. The algebra $\C[V\oplus V^\ast]^{\T^2}$ is generated by
a finite set of monomials which can be computed by the algorithm described in \cite[Section~1.4]{SturmfelsBook}.

We are interested in the quotient $\R[V]^{\T^2}/I_J^{\T^2}$, where $I_J$
is the ideal generated by the components $J_1, J_2$ of the moment map and $I_J^{\T^2} = I_J\cap\R[V]^{\T^2}$
is the invariant part; note that the monomials $x_j \overline{x_j}$ are invariant so that
$J_1, J_2 \in \R[V]^{\T^2}$. The closely related \emph{algebra of real regular functions on $M_0$} is given by
$\R[M_0] = \R[V]^{\T^2}/I_Z^{\T^2}$, where $I_Z$ is the subalgebra of polynomials on $V$ that vanish on $Z$ and
$I_Z^{\T^2} = I_Z\cap\R[V]^{\T^2}$.

For ``sufficiently large" representations $V$, the ideal $I_Z$ is generated by the two components
$J_1, J_2$ of the moment map, i.e., $I_J = I_Z$, which implies that $I_J^{\T^2} = I_Z^{\T^2}$.
This is the case, for example, when the $(\C^\times)^2$-action on $V$ is \emph{stable}, meaning that the principal
isotropy type consists of closed orbits; see \cite[Theorem~3.2 and Corollary~4.3]{HerbigSchwarz}.
When the representation is not stable, there is a stable $(\C^\times)^2$-subrepresentation $V^\prime$ of
$V$ that has the same shell, symplectic quotient, and algebra of real regular functions; see
\cite[Lemma~3]{HerbigSeaton2}; see also \cite[page~10]{FarHerSea} and \cite[Lemma~2]{WehlauPopov}.
As a brief summary of the results in these references applied to the situation at hand: $I_Z$ is generated
by $J_1$ and $J_2$ iff there are no coordinates $x_i$ that vanish identically on the shell, equivalently,
when $A$ can be put in the form $(D|C)$ where $D$ is a $2\times 2$ diagonal matrix with negative diagonal
entries and the entries of $C$ are nonnegative. When this condition fails, $V^\prime$
is constructed by setting to zero any $x_i$ that vanishes on the shell and hence deleting the corresponding
column in $A$. Note in particular that $\R[M_0]$ can always be computed as
$\R[V^\prime]^{\T^2}/I_{J|_{V^\prime}}^{\T^2}$ for a subrepresentation $V'$ of $V$.

The \emph{Hilbert series} of a finitely-generated graded algebra $R = \bigoplus_{d=0}^\infty R_d$ over a field
$\K$ is the generating function of the dimension of $R_d$,
\[
    \Hilb_{R}(t) =   \sum\limits_{d=0}^\infty t^d \dim_{\K} R_d.
\]
The Hilbert series has a radius of convergence of at least $1$ and is the power series of a rational function
in~$t$; see \cite[Section~1.4]{DerksenKemperBook}. For a representation of $\T^2$ as above, we let
$\Hilb_A^{\off}(t)$ denote the Hilbert series of the algebra $\R[V]^{\T^2}$ of off-shell invariants and let
$\Hilb_A^{\on}(t)$ denote the Hilbert series of the algebra $\R[V]^{\T^2}/I_J^{\T^2}$.
By \cite[Lemma~2.1]{HerbigSeaton}, we have the simple relationship
\begin{equation}
\label{eq:OnOffHilb}
    \Hilb_A^{\on}(t)    =   (1 - t^2)^2\Hilb_A^{\off}(t).
\end{equation}
As a consequence, it follows that $\Hilb_A^{\on}(t)$ depends only on the cotangent-lifted weight matrix
$(A|-A)$ and not on $A$. However, it is possible that two representations have isomorphic cotangent-lifted
representations while $I_Z^{\T^2} = I_J^{\T^2}$ for one and not the other. Hence, the algebra of real regular
functions $\R[M_0]$ depends on the representation and not merely the cotangent lift.

\begin{example}
\label{ex:LagrangSection}
Let
\[
    A   =   \begin{pmatrix} -1 & 0 & -1 \\ 0 & -1 & -1 \end{pmatrix}
\]
and
\[
    B   =   \begin{pmatrix} -1 & 0 & 1 \\ 0 & -1 & 1 \end{pmatrix}.
\]
Then the cotangent lift of the representation with weight matrix $A$ has weight matrix
\[
    (A|-A)  =   \left(\begin{array}{ccc|ccc}
                -1 & 0  & -1 & 1 & 0 & 1     \\
                0  & -1 & -1 & 0 & 1 & 1 \end{array}\right),
\]
which is clearly isomorphic to the cotangent lift with weight matrix $(B|-B)$ by simply permuting columns.
The moment map associated to $A$ is
\begin{align*}
    J_1^A(x_1,x_2,x_3)  &=  - \frac{1}{2}\big( x_1 \overline{x_1} + x_3\overline{x_3} \big)
    \\
    J_2^A(x_1,x_2,x_3)  &=  - \frac{1}{2}\big( x_2 \overline{x_2} + x_3\overline{x_3} \big),
\end{align*}
so that the corresponding shell $Z_A$ is the origin and the symplectic quotient $M_0^A$ is a point.
Because each $x_i$ vanishes on the shell, the representation $V^\prime$ is the origin, and $\R[M_0^A]$
is given by $\R[V_A^\prime]^{\T^2}/I_{J^A|_{V^\prime}}^{\T^2} = \R$.

However, the moment map associated to $B$ is
\begin{align*}
    J_1^B(x_1,x_2,x_3)  &=  \frac{1}{2}\big( - x_1 \overline{x_1} + x_3\overline{x_3} \big)
    \\
    J_2^B(x_1,x_2,x_3)  &=  \frac{1}{2}\big( - x_2 \overline{x_2} + x_3\overline{x_3} \big),
\end{align*}
and the shell $Z_B$ has real dimension $4$ and $M_0^B$ has real dimension $2$. In this case,
each $x_i$ obtains a nonzero value on the shell, and $\R[M_0^B]$ is equal to the algebra
$\R[V_B]^{\T^2}/I_{J^B}^{\T^2}$.
\end{example}

Representations with weight matrices $A$ and $B$ are equivalent if $B$ can be obtained from $A$
by permuting columns and elementary row operations over $\Z$. For the cotangent-lift, because transposing a column
of $A$ with the corresponding column of $-A$ corresponds to multiplying the column by $-1$,
the representations corresponding to $(A|-A)$ and $(B|-B)$ are equivalent if $B$ can be obtained
from $A$ by permuting columns, elementary row operations over $\Z$, and multiplying columns by $-1$. In the sequel,
we will take advantage of this fact and put $A$ into a standard form given in
Definition~\ref{def:Generic}. Note that, if we begin with a weight matrix $B$ such that
$\R[M_0^B] = \R[V_B]^{\T^2}/I_{J^B}^{\T^2}$, replacing $B$ with a matrix $A$ in standard form may break this
relationship; we may have $\R[M_0^A] \neq \R[V_A]^{\T^2}/I_{J^A}^{\T^2}$ as in Example~\ref{ex:LagrangSection}
above. However, as $\Hilb_B^{\on}(t)$ depends only on the cotangent lift, we still have
$\Hilb_B^{\on}(t) = \Hilb_A^{\on}(t)$. That is, the change to standard form may cause $\Hilb_A^{\on}(t)$
to no longer describe $\R[M_0^A]$, but it still describes the algebra $\R[M_0^B]$ associated to the symplectic
quotient associated to $B$. For this reason, we state our results in terms of $\Hilb_A^{\on}(t)$ in standard
form with no loss of generality.


\subsection{Standard form and degeneracies}
\label{subsec:T2Reps}

Let $A \in\Z^{2\times n}$ be the weight matrix of a linear representation of $\T^2$ on $\C^n$.
To avoid trivialities, we assume that there are no trivial subrepresentations, i.e., $A$ has no zero columns.
Let $d_{ij}$ denote the $2\times 2$ minor associated to columns $i$ and $j$, i.e.,
$d_{ij} = a_{1i} a_{2j} - a_{2i} a_{1j}$.
Recall that the $d_{ij}$ satisfy the \emph{Pl\"{u}cker relations} \cite[page 138]{PopovVinberg}. That is,
for any indices $i_0, i_1, i_2$ and $j$, we have
\begin{equation}
\label{eq:Pluecker}
    d_{i_1 i_2} d_{i_0 j} - d_{i_0 i_2} d_{i_1 j} + d_{i_0 i_1} d_{i_2 j} = 0.
\end{equation}

\begin{definition}
\label{def:Generic}
We say that a weight matrix $A\in\Z^{2\times n}$ is:
\begin{enumerate}
\item[(i)]      \emph{faithful} if $\Rank A = 2$ and the $\gcd$ of the set of $2\times 2$ minors of $A$ is $1$;
\item[(ii)]     \emph{in standard form} if $a_{1i} > 0$ for each $i$;
\item[(iii)]    \emph{generic} if it is in standard form, $a_{1i}\neq a_{1j}$ for $i\neq j$, and
                $d_{ij} + d_{ik} + d_{jk} \neq 0$ for each distinct $i,j,k$;
\item[(iv)]     \emph{completely generic} if it is in standard form, generic, and $d_{ij} + d_{jk} + d_{ki} \neq 0$
                for each distinct $i,j,k$; and
\item[(v)]      \emph{degenerate} if it is in standard form and is not generic.
\end{enumerate}
If $A$ is generic, by transposing $i$ and $j$ in the condition $d_{ij} + d_{ik} + d_{jk} \neq 0$,
we also have that $d_{ij} - d_{ik} - d_{jk} \neq 0$ for each distinct $i,j,k$.
\end{definition}

The condition that the weight matrix is faithful is equivalent to the representation being faithful;
see Section~\ref{subsec:T2Reps0}. The condition $d_{ij} + d_{ik} + d_{jk} = 0$ can be interpreted geometrically
as corresponding to the three vectors $\mathbf{a}_i, - \mathbf{a}_j, \mathbf{a}_k \in\R^2$ being collinear,
while $d_{ij} + d_{jk} + d_{ki} = 0$ corresponds to the three vectors $\mathbf{a}_i, \mathbf{a}_j, \mathbf{a}_k$
being collinear. Hence, if $A$ is generic, then for any distinct $i,j,k$, the vectors
$\mathbf{a}_i, - \mathbf{a}_j, \mathbf{a}_k$ are not collinear; if $A$ is completely generic, then for any choice
of $i,j,k$ and any choice of signs, $\pm\mathbf{a}_i, \pm\mathbf{a}_j, \pm\mathbf{a}_k$ are not collinear.

We may assume that $A$ is faithful and in standard form
with no loss of generality, i.e., without changing $\Hilb_A^{\on}(t)$. Specifically, if $A$ is not
faithful, we may replace $\T^2$ with $\T^2/K$ where $K$ is the subgroup acting trivially, yielding a
representation of $\T^{\Rank A}$ with the same symplectic quotient, see \cite[Lemma~2]{FarHerSea}. Similarly,
we may ensure that each $a_{1i} \neq 0$ by adding any but finitely many scalar multiples of the second row to
the first and then can put $A$ in standard form by multiplying columns by $-1$. Note that if $n \leq 2$,
then either the representation is not faithful or there are no nontrivial invariants, so we can assume that
$n > 2$.

It is clear that multiplying columns by $-1$ will change whether $A$ is in standard form, and elementary row operations over
$\Z$ can change a degenerate weight matrix to a generic one. Hence, for representations corresponding to weight
matrices $A$ and $B$ such that the cotangent lifts $(A|-A)$ and $(B|-B)$ describe equivalent representations,
and hence $\Hilb_A^{\on}(t)$ and $\Hilb_B^{\on}(t)$ coincide, it is possible that $A$ is degenerate while
$B$ is generic.

\begin{example}
\label{ex:GenDegenMatrix}
The weight matrix
\[
    A   =   \begin{pmatrix} 2 & 1 & 4 \\ 1 & -1 & 1 \end{pmatrix}
\]
is degenerate as $d_{12} + d_{13} + d_{23} = 0$. Adding twice the second row to the first
and then multiplying the second column by $-1$ yields
\[
    B   =   \begin{pmatrix} 4 & 1 & 6 \\ 1 & 1 & 1 \end{pmatrix},
\]
which is generic. As $(A|-A)$ and $(B|-B)$ are weight matrices of equivalent representations,
$\Hilb_A^{\on}(t) = \Hilb_B^{\on}(t)$.
\end{example}

However, there are degenerate weight matrices that cannot be made generic by these changes of bases, e.g.,
\[
    A   =   \begin{pmatrix} 1 & 1 & 1 \\ 0 & 1 & 1 \end{pmatrix}.
\]

Finally, observe that the condition $d_{ij} + d_{ik} + d_{jk} \neq 0$ for all distinct $i,j,k$ is not invariant
under multiplying columns by $-1$. However, as a consequence of the geometric characterization described above,
the condition that both $d_{ij} + d_{ik} + d_{jk} \neq 0$ and $d_{ij} + d_{jk} + d_{ki} \neq 0$ for all distinct
$i,j,k$ is invariant under multiplying columns by $-1$. Of course, any of these conditions is invariant
under elementary row operations applied to $A$.


\subsection{Counting solutions of systems of linear congruences}
\label{subsec:LinCongruence}

In this section, we recall results concerning the number of solutions of a system of linear congruences due to
Smith \cite{SmithLinCongr}; see \cite{NewmanSmithNormalForm} for a modern discussion.
We begin with the following folklore result; see \cite[Art.~14*, p. 314]{SmithLinCongr} and
\cite[p.~369]{NewmanSmithNormalForm}.

\begin{theorem}
\label{thrm:SmithNormalForm}
Let $A$ be a nonzero $m\times n$ matrix over a PID $R$. Then $A$ can be decomposed into $A = PSQ$, where
$P$ is an invertible $m\times m$ matrix, $Q$ in an invertible $n\times n$ matrix, $S$ is an $m\times n$ matrix
with nonzero entries only on the main diagonal, and the main diagonal entries $a_i$ $(1\le i\le \min\{m,n\})$ of
$S$ satisfy $a_i | a_{i+1}$ for all $i$. In particular, there exists an $1\le r\le \min\{m,n\}$ such that the
values $a_i\neq 0$ for $i\leq r$ and $a_i = 0$ for $i > r$.

The elements $a_i$ are unique up to multiplication by a unit, and the matrix $S$ is called a \emph{Smith normal form} of $A$.
For $R= \Z$, we will assume the canonical choice $a_i \ge 0$.
\end{theorem}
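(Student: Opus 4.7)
The plan is to prove existence by an iterative row/column reduction driven by the Bezout identity in the PID $R$, paired with an invariant-theoretic argument for uniqueness via the ideals generated by $k\times k$ minors. I would induct on $\min\{m,n\}$, reducing at each stage to a block of the form $\begin{pmatrix} a & \mathbf{0} \\ \mathbf{0} & A^\prime \end{pmatrix}$ with $a$ dividing every entry of $A^\prime$; applying the inductive hypothesis to $A^\prime$ then produces the divisibility chain $a_1 \mid a_2 \mid \cdots \mid a_r$.

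The core lemma to establish is the single reduction step. First I would bring a generator of the ideal generated by the entries of $A$ into position $(1,1)$. Given two entries $a$ and $b$ in the first column at rows $1$ and $j$, write $d = \gcd(a,b) = xa + yb$ via Bezout; the $2 \times 2$ matrix with rows $(x,y)$ and $(-b/d,\, a/d)$ has determinant $1$, embeds into an invertible $m \times m$ matrix acting on rows $1$ and $j$, and converts $(a,b)^T$ into $(d,0)^T$. Analogous column operations handle the first row. Alternating along the first row and column, each nontrivial step replaces $a_{11}$ by a proper divisor, strictly enlarging the principal ideal $(a_{11})$. After finitely many steps this stabilizes, at which point $a_{11}$ divides every entry in its row and column, and I clear them by standard elimination. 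If the resulting pivot fails to divide some $a_{ij}$ of the remaining block $A^\prime$, I add row $i$ to row $1$ (introducing $a_{ij}$ into the first row) and restart the reduction; the same argument forces termination.

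For uniqueness, I would use the ideal $I_k(A) \subseteq R$ generated by the $k \times k$ minors of $A$. Because each $k \times k$ minor of $PA$ (respectively $AQ$) is an $R$-linear combination of $k \times k$ minors of $A$, and the same holds in reverse using $P^{-1}$ (respectively $Q^{-1}$), the ideal $I_k(A)$ is invariant under multiplication by invertible matrices on either side. For the diagonal matrix $S$ one computes directly that $I_k(S) = (a_1 a_2 \cdots a_k)$ for $k \le r$ and $I_k(S) = 0$ for $k > r$. Hence $r$ is determined, and each $a_k$ is determined up to a unit by the relation $I_k(A) = I_{k-1}(A)\cdot(a_k)$. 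For $R = \Z$ the units are $\pm 1$, so the convention $a_i \geq 0$ pins the $a_i$ down uniquely.

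The main obstacle is justifying the termination of the Bezout reduction in a general PID, where no Euclidean norm is available to track progress. I would handle this via the Noetherian property: every nontrivial Bezout step strictly enlarges the principal ideal $(a_{11})$, and any strictly ascending chain of ideals in the PID $R$ must stabilize. This replaces the familiar Euclidean-division argument used over $\Z$ or $F[x]$ by a purely ideal-theoretic one, making the proof uniform across all PIDs.
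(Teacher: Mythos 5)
Your proof is correct. Note, however, that the paper does not prove this statement at all: it is presented as a folklore result with citations to Smith's original article and to Newman's survey, so there is no in-paper argument to compare against. Your existence argument (Bezout-driven row/column reduction, with termination secured by the ascending chain condition on principal ideals rather than a Euclidean norm) and your uniqueness argument (invariance of the ideals $I_k$ of $k\times k$ minors under multiplication by invertible matrices, giving $a_k$ up to a unit from $I_k(A)=I_{k-1}(A)\cdot(a_k)$) together constitute the standard complete proof valid over an arbitrary PID; the latter is exactly the mechanism the paper relies on implicitly in Proposition~2.2, where the $\Delta_i$ are identified with $\prod_{k\le i}a_k$.
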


Less well-known is the following additional statement; see \cite[Art.~14*, p.~314]{SmithLinCongr} and
\cite[p.~370]{NewmanSmithNormalForm}.

\begin{proposition}
\label{prop:SmithNormalFormGCD}
Let $A$ be a nonzero $m\times n$ matrix over a PID $R$ with Smith normal form $S$.
For $1\le i\le \min\{m,n\}$, let $\Delta_i$ denote a $\gcd$ of the $i\times i$ minors of $A$.
Then $\Delta_i = \prod_{k=1}^i a_k$ up to multiplication by a unit. In particular, setting $\Delta_0 = 1$, up to multiplication by a unit we have
$a_i = \Delta_{i}/\Delta_{i-1}$ for $1\le i\le r+1$.
\end{proposition}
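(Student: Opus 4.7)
The plan is to reduce the computation of $\Delta_i(A)$ to that of $\Delta_i(S)$, exploiting the fact that the $\gcd$ of the $i\times i$ minors of a matrix is invariant (up to a unit of $R$) under left or right multiplication by a matrix that is invertible over $R$, and then to read off $\Delta_i(S)$ directly from the diagonal of $S$ using the divisibility chain $a_1\mid a_2\mid\cdots\mid a_r$.

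First I would establish the invariance lemma. By the Cauchy--Binet formula, each $i\times i$ minor of $PA$ is an $R$-linear combination of $i\times i$ minors of $A$, so $\Delta_i(A)$ divides $\Delta_i(PA)$ in $R$. Applying the same statement with $P^{-1}$ to the product $PA$ yields the reverse divisibility, whence $\Delta_i(PA) = \Delta_i(A)$ up to a unit. Right multiplication by an invertible $Q$ is handled symmetrically, either by transposing or by the column version of Cauchy--Binet. Combining the two gives $\Delta_i(A) = \Delta_i(PSQ) = \Delta_i(S)$ up to a unit.

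Next I would compute $\Delta_i(S)$ directly. Since $S$ is diagonal, an $i\times i$ submatrix obtained by selecting a row-index set $I$ and a column-index set $J$ has determinant zero unless $I = J \subseteq \{1,\ldots,r\}$, in which case it equals $\prod_{k\in I} a_k$. Thus for $i \le r$ the nonzero $i\times i$ minors of $S$ are precisely the products $a_{j_1}\cdots a_{j_i}$ with $1 \le j_1 < \cdots < j_i \le r$. The divisibility chain $a_1\mid\cdots\mid a_r$ forces $a_1 a_2 \cdots a_i$ to divide each such product, while this element is itself one of the minors, so $\Delta_i(S) = a_1 a_2 \cdots a_i$ up to a unit. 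For $r < i \le \min\{m,n\}$ every $i\times i$ minor of $S$ vanishes, so $\Delta_i = 0$, which matches $\prod_{k=1}^i a_k$ since $a_{r+1} = 0$. Combining with the invariance lemma gives $\Delta_i = \prod_{k=1}^i a_k$ up to a unit for every $i$, and the identity $a_i = \Delta_i/\Delta_{i-1}$ is then immediate for $1\le i \le r$; for $i = r+1$ both $a_{r+1}$ and the ratio $\Delta_{r+1}/\Delta_r = 0/\Delta_r$ equal $0$.

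The main point requiring care is the Cauchy--Binet step: one needs that the formula, which is classical over fields, continues to hold over an arbitrary commutative ring, so that the $i\times i$ minors of $PA$ actually lie in the ideal generated by the $i\times i$ minors of $A$. This is standard, since Cauchy--Binet is a universal polynomial identity with integer coefficients, but it is the only nontrivial input. The rest of the argument is bookkeeping, with the caveat that equalities of $\gcd$s in a PID must be read modulo multiplication by units throughout, and that the case $i > r$ needs to be treated separately so as to include the degenerate ratio in the ``in particular'' statement.
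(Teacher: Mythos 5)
Your proof is correct and follows essentially the same route as the paper: reduce $\Delta_i(A)$ to $\Delta_i(S)$ by invariance under the Smith decomposition, then read off $\Delta_i(S)=a_1\cdots a_i$ from the diagonal and the divisibility chain. The only difference is that you justify the invariance via Cauchy--Binet for arbitrary invertible $P,Q$, whereas the paper simply asserts it for the elementary row and column operations; your version is a more complete writeup of the same argument.
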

\begin{proof}
It is easily verified that the row and column operations used to compute the Smith normal form
$S$ from $A$ do not affect the $\Delta_i$. Thus, $A$ and $S$ share the same $\Delta_i$, while
$\Delta_i \sim \prod_{k=1}^i a_k$ is obvious for $S$.
\end{proof}

We then have the following.

\begin{theorem}
[{\cite[Art.~17, p.~320 and Art.~18*, p.~324]{SmithLinCongr}}]
\label{thrm:SmithLinCong}
Let $A$ be a nonzero $m\times n$ matrix over $\Z$ with Smith normal form $S$ and let $N > 1$ be an integer.
Then the number of distinct solutions $\mathbf{x}\in (\Z/N\Z)^n$ of the homogeneous system of congruences
$A \mathbf{x} \equiv \mathbf{0} \mod N$ is
\[
    N^{n-r}\prod_{i=1}^r \gcd(\Delta_{i}/\Delta_{i-1},N)
    =
    N^{n-\min\{m,n\}}\prod_{i=1}^{\min\{m,n\}} \gcd(a_i,N).
\]
\end{theorem}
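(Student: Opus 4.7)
The plan is to reduce the counting problem to the diagonal case using the Smith normal form decomposition $A = PSQ$. Since $P$ and $Q$ are unimodular over $\Z$, their inverses also have integer entries, so reduction modulo $N$ turns them into bijections on $(\Z/N\Z)^m$ and $(\Z/N\Z)^n$, respectively. Hence $A\mathbf{x} \equiv \mathbf{0} \mod N$ is equivalent to $SQ\mathbf{x} \equiv \mathbf{0} \mod N$, and under the bijective substitution $\mathbf{y} = Q\mathbf{x} \mod N$ the number of solutions equals the number of $\mathbf{y} \in (\Z/N\Z)^n$ satisfying $S\mathbf{y} \equiv \mathbf{0} \mod N$.

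Next, because $S$ is diagonal, the system decouples into independent scalar congruences. For $1 \leq i \leq r$ one has $a_i y_i \equiv 0 \mod N$; for $i > r$ the corresponding coordinate is unconstrained, either because $a_i = 0$ when $r < i \leq \min\{m,n\}$, or because there is no equation involving $y_i$ at all when $\min\{m,n\} < i \leq n$. Using the standard one-variable count that $\#\{y \in \Z/N\Z : ay \equiv 0 \mod N\} = \gcd(a, N)$ and multiplying across the $n$ independent coordinates yields the total
\[
N^{n-r}\prod_{i=1}^r \gcd(a_i, N).
\]

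To reconcile this with the two expressions in the statement, I would invoke Proposition~\ref{prop:SmithNormalFormGCD}, which identifies $a_i$ with $\Delta_i/\Delta_{i-1}$ up to a unit (and $\gcd$ is insensitive to sign), giving the first form. For the second form, I would note that $\gcd(a_i, N) = N$ whenever $a_i = 0$, so extending the product from $i \leq r$ up to $i \leq \min\{m,n\}$ inserts $\min\{m,n\} - r$ additional factors of $N$, which are then absorbed by lowering the external exponent from $n-r$ to $n - \min\{m,n\}$.

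The main obstacle is essentially bookkeeping. One must carefully track which coordinates are unconstrained when $m \neq n$: in the case $m < n$ some $y_i$ have no associated row of $S$, while in the case $m > n$ there are extra all-zero rows of $S$ that impose only trivial conditions. In either case one verifies that exactly $n - r$ of the $y_i$ are free, so the formula takes the uniform shape above. Everything else is a direct application of the existence of the Smith normal form together with the elementary one-variable congruence count, so I do not anticipate any genuine technical difficulty beyond this index bookkeeping.
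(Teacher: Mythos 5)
Your proposal is correct and follows essentially the same route as the paper: decompose $A = PSQ$, use that $Q$ induces an automorphism of $(\Z/N\Z)^n$ to reduce to the diagonal system $S\mathbf{y}\equiv\mathbf{0}$, and count $\gcd(a_i,N)$ solutions per constrained coordinate and $N$ per free one. Your extra remarks reconciling the two displayed forms of the count are sound and merely make explicit what the paper leaves implicit.
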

\begin{proof}
Let $A = PSQ$ denote a Smith decomposition of $A$ over $\Z$. We interpret all matrices over $\Z/N\Z$.
Then $A \mathbf{x} \equiv \mathbf{0}$ is equivalent to $P^{-1}A \mathbf{x}\equiv SQ \mathbf{x} \equiv \mathbf{0}$,
while $\mathbf{x}\mapsto Q\mathbf{x}$ defines an automorphism of $(\Z/N\Z)^n$. In particular, using the
substitution $\mathbf{y}= Q\mathbf{x}$, $A \mathbf{x} \equiv \mathbf{0}$ has as many distinct solutions
as the system of equations $S \mathbf{y} \equiv \mathbf{0}$. For $1\le i \le r$, the equation $a_iy_i \equiv 0 \mod N$ has $\gcd(a_i,N)$ distinct solutions with $a_i = \Delta_{i}/\Delta_{i-1}$. For $r< i \le n$, $y_i$ is a free variable with $N$ distinct solutions.
\end{proof}

With this, we have the following, which will be needed in the sequel.

\begin{proposition}
\label{prop:NumSolsCongruence}
Let $n > 2$, let $A\in\Z^{2\times n}$ be a weight matrix of rank $2$, and let $g$ denote the $\gcd$
of the set of $2\times 2$ minors $d_{ij}$ of $A$. For each $i\neq j$ such that $d_{ij} \neq 0$, the number of pairs $(\xi,\zeta)$ of
$d_{ij}$th roots of unity such that $\xi^{d_{ik}} \zeta^{d_{jk}} = 1$ for each $k\neq i,j$ is given by
$g |d_{ij}|$. In particular, if $A$ is faithful, then this number is equal to $|d_{ij}|$.
\end{proposition}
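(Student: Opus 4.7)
The plan is to reformulate the desired count as the order of the kernel of a torus homomorphism and then compute it using the Smith normal form of the associated matrix. First I observe that since $d_{ii} = d_{jj} = 0$ and $d_{ji} = -d_{ij}$, the conditions $\xi^{d_{ij}} = \zeta^{d_{ij}} = 1$ are precisely the $l = j$ and $l = i$ instances of the larger system $\xi^{d_{il}}\zeta^{d_{jl}} = 1$. Hence the pairs in question are exactly the elements of $\ker \Phi$, where
\[
    \Phi \co \T^2 \to \T^n,\qquad (\xi,\zeta)\longmapsto \big(\xi^{d_{il}}\zeta^{d_{jl}}\big)_{l=1}^n
\]
is the torus homomorphism associated to the integer matrix $A' \in \Z^{2\times n}$ whose $l$-th column is $(d_{il}, d_{jl})^T$.

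Next I would apply a Smith decomposition $A' = PSQ$ with $P \in \GL_2(\Z)$ and $Q \in \GL_n(\Z)$ from Theorem~\ref{thrm:SmithNormalForm}, which realizes $\Phi$, up to automorphisms of source and target, as the diagonal map $(w_1, w_2) \mapsto (w_1^{a_1}, w_2^{a_2}, 1, \ldots, 1)$. The $2 \times 2$ submatrix of $A'$ on columns $i$ and $j$ has entries $\pm d_{ij}$ off the diagonal and zero on it, so its determinant is $d_{ij}^2 \neq 0$; consequently $\Rank A' = 2$, both $a_1, a_2 \neq 0$, and $\ker \Phi$ is isomorphic to the product of an $a_1$th and an $a_2$th root-of-unity group, with order $a_1 a_2 = \Delta_2(A')$ by Proposition~\ref{prop:SmithNormalFormGCD}, where $\Delta_2(A')$ denotes the gcd of the $2 \times 2$ minors of $A'$.

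The crux is then to show $\Delta_2(A') = |d_{ij}|\,g$ using the Pl\"ucker relations. For any $k_1, k_2$ the corresponding minor of $A'$ is $d_{ik_1}d_{jk_2} - d_{ik_2}d_{jk_1}$; applying \eqref{eq:Pluecker} with $(i_0,i_1,i_2) = (k_1,k_2,i)$ and the free index taken to be $j$ rewrites this as $d_{ij}\,d_{k_1 k_2}$. Thus every $2\times 2$ minor of $A'$ factors as $d_{ij}$ times a $2\times 2$ minor of $A$, giving $\Delta_2(A') = |d_{ij}| \cdot \gcd\{d_{k_1 k_2}\} = |d_{ij}|\,g$; when $A$ is faithful we have $g = 1$ and the count reduces to $|d_{ij}|$. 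I expect the Pl\"ucker step to be the substantive one, as it is the only place where the specific structure of $A'$ as a matrix of minors of $A$ is exploited, and it is what causes $|d_{ij}|$ to factor cleanly out of $\Delta_2(A')$.
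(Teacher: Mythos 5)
Your proof is correct, and it takes a genuinely different route from the one in the paper. The paper works with only the conditions for $k\neq i,j$, viewed as a system of $(n-2)$ linear congruences modulo $d_{ij}$ in $(\Z/d_{ij}\Z)^2$, and invokes Theorem~\ref{thrm:SmithLinCong} to count solutions; in the faithful case this leads to a case split ($n=3$ versus $n\geq 4$) and a somewhat delicate chain of $\gcd$ manipulations, and the general case $g>1$ is then handled by a separate reduction along the quotient $(\Z/d_{ij}\Z)^2\to(\Z/(d_{ij}/g)\Z)^2$. Your key move is to absorb the constraint that $\xi,\zeta$ be $d_{ij}$th roots of unity into the system itself, via the observation that the $l=i$ and $l=j$ instances of $\xi^{d_{il}}\zeta^{d_{jl}}=1$ are exactly $\zeta^{\pm d_{ij}}=1$ and $\xi^{d_{ij}}=1$. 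This turns the count into the order of the kernel of a homomorphism $\T^2\to\T^n$ of full-rank exponent matrix $A'$, which by Theorem~\ref{thrm:SmithNormalForm} and Proposition~\ref{prop:SmithNormalFormGCD} is simply $\Delta_2(A')$ --- no congruence-counting theorem, no case split on $n$, and no separate treatment of $g>1$. Both arguments ultimately rest on the same Pl\"{u}cker identity \eqref{eq:SystemCongruencePluecker} to factor $d_{ij}$ out of the $2\times 2$ minors of the derived matrix, but in your setup that identity immediately gives $\Delta_2(A')=|d_{ij}|\,g$ and hence the result in one stroke. The only point worth spelling out a little more is the standard functoriality $\Phi_{PSQ}=\Phi_Q\circ\Phi_S\circ\Phi_P$ with $\Phi_P,\Phi_Q$ automorphisms of the source and target tori, which justifies $|\ker\Phi_{A'}|=|\ker\Phi_S|=a_1a_2$; once that is stated, the argument is complete and, in our view, cleaner and more uniform than the one in the paper.
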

\begin{proof}
First assume that $A$ is faithful so that $\Delta_2 = 1$. By fixing a primitive $d_{ij}$th root of unity
$\xi_0$, we can identify the set of $(\xi,\zeta)$ with $(\Z/d_{ij}\Z)^2$ via
$(\xi,\zeta) = (\xi_0^x,\xi_0^y)$. Then the conditions $\xi^{d_{ik}} \zeta^{d_{jk}} = 1$ for each $k\neq i,j$
coincide with the system of congruences
\begin{equation}
\label{eq:SystemCongruence}
    \begin{pmatrix}
        d_{i1}  &   d_{j1}
            \\
        d_{i2}  &   d_{j2}
            \\
        \vdots  &   \vdots
            \\
        d_{in}  &   d_{jn}
    \end{pmatrix}
    \begin{pmatrix} x \\ y \end{pmatrix}
    \equiv
    \mathbf{0}
    \mod d_{ij},
\end{equation}
where the rows $(d_{ii}, d_{ji})$ and $(d_{ij}, d_{jj})$ are removed so that the coefficient matrix
is of size $(n-2)\times 2$.

If $n = 3$, then there is only one $k\neq i,j$, so Equation~\eqref{eq:SystemCongruence} is the single
congruence $x d_{ik}  + y d_{jk} \equiv 0 \mod d_{ij}$. By Theorem~\ref{thrm:SmithLinCong},
the number of solutions to this congruence is given by
$|d_{ij}|\gcd(\Delta_1,d_{ij}) = |d_{ij}|\gcd(d_{ik},d_{jk},d_{ij}) = |d_{ij}|$, as $A$ is faithful.

For $n \ge 4$, Theorem~\ref{thrm:SmithLinCong} implies that the number of solutions to
Equation~\eqref{eq:SystemCongruence} is $\gcd(a_2, d_{ij}) \gcd(\Delta_1, d_{ij})$,
where $\Delta_1 = \gcd\{d_{ik}, d_{jk} : k \neq i,j \}$. As $A$ has rank $2$ and $d_{ij}\neq 0$, we have
$\Delta_1\neq 0$, so that $a_2 = \Delta_2/\Delta_1$ where $\Delta_2$ is the $\gcd$ of the $2\times 2$ minors
of the $(n-2)\times 2$ coefficient matrix of Equation~\eqref{eq:SystemCongruence}; hence,
the number of solutions is equal to $\gcd(\Delta_2/\Delta_1, d_{ij}) \gcd(\Delta_1, d_{ij})$.

Applying the Pl\"{u}cker relations, Equation~\eqref{eq:Pluecker} with $i_0 = k_2$, $i_1 = i$, $i_2 = k_1$,
we have that the $2\times 2$ submatrix corresponding to rows $k_1, k_2$ of the coefficient matrix of
Equation~\eqref{eq:SystemCongruence} has determinant
\begin{equation}
\label{eq:SystemCongruencePluecker}
    d_{i k_1} d_{j k_2} - d_{j k_1} d_{i k_2} = d_{i j} d_{k_1 k_2}.
\end{equation}
Thus,
\[
     \Delta_2
     =   \gcd\{d_{i k_1} d_{j k_2} - d_{j k_1} d_{i k_2} :
            k_1,k_2\neq i,j \}
     =  \gcd\{d_{ij}d_{k_1 k_2} : k_1,k_2\neq i,j \}
     =  d_{ij}\Delta_2^\prime,
\]
where $\Delta_2^\prime = \gcd\{d_{k_1 k_2} : k_1,k_2\neq i,j \}$. Then the number of solutions is given by
\[
     \gcd(\Delta_2/\Delta_1, d_{ij}) \gcd(\Delta_1, d_{ij})
        =   \frac{1}{\Delta_1} \gcd(d_{ij}\Delta'_2, d_{ij}\Delta_1) \gcd(\Delta_1, d_{ij})
        =   \frac{|d_{ij}|}{\Delta_1} \gcd(\Delta'_2,\Delta_1) \gcd(d_{ij},\Delta_1).
\]
Noting that $\gcd(\Delta_2^\prime, \Delta_1, d_{ij}) = 1$ as $A$ is faithful, $\gcd(\Delta_2^\prime,\Delta_1)$
and $\gcd(d_{ij},\Delta_1)$ are relatively prime, and we can write the number of solutions as
\[
     \frac{|d_{ij}|}{\Delta_1}  \gcd(\Delta_2^\prime,\Delta_1) \gcd(d_{ij},\Delta_1)
     =  \frac{|d_{ij}|}{\Delta_1} \gcd(d_{ij}\Delta_2^\prime,\Delta_1)
     =  \frac{|d_{ij}|}{\Delta_1} \gcd(\Delta_2,\Delta_1)
     =  |d_{ij}| \gcd(\Delta_2/ \Delta_1,1)=|d_{ij}|.
\]

If $A$ is not faithful so that $g > 1$, then we may apply the above result to conclude that there are
$|d_{ij}|/g$ pairs $(\eta,\nu)$ of $|d_{ij}|/g$th roots of unity such that
$\eta^{d_{ik}/g} \nu^{d_{jk}/g} = 1$ for all $k\neq i,j$. Considering the surjective homomorphism
$(\Z/d_{ij}\Z)^2\to(\Z/(d_{ij}/g)\Z)^2$ given by component-wise multiplication by $g$ completes the proof.
\end{proof}


\section{Computation of the Hilbert series}
\label{sec:HilbSer}

In this section, we give a formula for the Hilbert series $\Hilb_A^{\on}(t)$
of a representation $V_A$ of $\T^2$, analogous to the formula given in
\cite[Theorem~3.1]{HerbigSeaton}. We start with a formula for the completely generic case in Theorem~\ref{thrm:HilbGeneric}
which we then extend to the generic and degenerate case in Corollary~\ref{cor:HilbDegenerate}.


\subsection{A first formula}
\label{subsec:HilbGeneric}

Here we have the following.

\begin{theorem}
\label{thrm:HilbGeneric}
Let $n > 2$ and let $A\in\Z^{2\times n}$ be a faithful completely generic weight matrix. The Hilbert series
$\Hilb_A^{\on}(t)$ is given by
\begin{equation}
\label{eq:HilbGeneric}
    \sum_{\substack{i\neq j, \\ d_{ij} > 0}} \quad
        \sum\limits_{\substack{\xi^{d_{ij}}=1 \\ \zeta^{d_{ij}} = 1}}
        \frac{1}{d_{ij}^2
            \prod\limits_{k\neq i,j}
                \big(1 - \xi^{d_{ik}} \zeta^{d_{jk}}
                    t^{(d_{ij} + d_{ik} + d_{jk})/d_{ij} } \big)
                \big(1 - \xi^{-d_{ik}} \zeta^{-d_{jk}}
                    t^{(d_{ij} - d_{ik} - d_{jk})/d_{ij} } \big)  }.
\end{equation}
\end{theorem}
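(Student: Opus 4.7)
The plan is to evaluate the Molien integral representation of $\Hilb_A^{\off}(t)$ by multidimensional residues, then apply \eqref{eq:OnOffHilb} to obtain $\Hilb_A^{\on}(t) = (1-t^2)^2 \Hilb_A^{\off}(t)$. Since $\R[V]^{\T^2}\otimes_\R\C \cong \C[V\oplus V^*]^{\T^2}$, where $V\oplus V^*$ carries weight matrix $(A|-A)$, classical Molien theory yields
\[
    \Hilb_A^{\off}(t) = \frac{1}{(2\pi i)^2} \oint_{|z_1|=1} \oint_{|z_2|=1} \prod_{j=1}^n \frac{1}{(1 - t z_1^{a_{1j}} z_2^{a_{2j}})(1 - t z_1^{-a_{1j}} z_2^{-a_{2j}})} \frac{dz_1\, dz_2}{z_1 z_2}.
\]

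For $|t|$ small, this double integral is evaluated via two-dimensional residues at codimension-two poles. After a suitable deformation of the contour, the relevant contributions come from transverse intersections $z^{\mathbf{a}_i} = t^{-1}$ and $z^{\mathbf{a}_j} = t$, indexed by ordered pairs $(i,j)$ with $d_{ij}>0$. The complete genericity hypothesis ensures that these intersections are transverse, simple, and distinct across different unordered pairs, so no three factors share a common zero. For each such pair, the system has exactly $d_{ij}$ solutions in $(\C^\times)^2$; the Jacobian determinant of the map $(z^{\mathbf{a}_i} - t^{-1},\, z^{\mathbf{a}_j} - t)$ at a solution simplifies to $\pm d_{ij}/(z_1^\ast z_2^\ast)$, producing the factor $1/d_{ij}$ in each residue. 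The two non-vanishing factors $(1 - t z^{-\mathbf{a}_i})(1 - t z^{\mathbf{a}_j})$ evaluate to $(1-t^2)^2$ at such a solution, precisely cancelling the prefactor from \eqref{eq:OnOffHilb}.

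To parametrize the $d_{ij}$ solutions, consider the homomorphism $\phi_{ij}\colon (\C^\times)^2 \to (\C^\times)^2$ defined by $(z_1,z_2)\mapsto (z^{\mathbf{a}_i}, z^{\mathbf{a}_j})$, whose kernel $\ker\phi_{ij}$ is a subgroup of order $d_{ij}$ contained in the group of pairs $(\xi,\zeta)$ with $\xi^{d_{ij}}=\zeta^{d_{ij}}=1$. The identity $d_{ij}\mathbf{a}_k = -d_{jk}\mathbf{a}_i + d_{ik}\mathbf{a}_j$, from Cramer's rule applied to expressing $\mathbf{a}_k$ in terms of the basis $\{\mathbf{a}_i,\mathbf{a}_j\}$ of $\Q^2$, implies that at each solution the remaining factor $(1 - t z^{\pm\mathbf{a}_k})$ for $k\ne i,j$ assumes the form $1 - \chi(\xi,\zeta)\, t^{(d_{ij} \pm (d_{ik}+d_{jk}))/d_{ij}}$ for some character $\chi$, with the exponents of $t$ exactly matching those in the formula. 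The final step is to rewrite the sum over $\ker\phi_{ij}$ as an averaged sum over the full group of such root-of-unity pairs, introducing the displayed prefactor $1/d_{ij}^2$; the natural root-of-unity factor $\xi^{a_{1k}}\zeta^{a_{2k}}$ arising in the parametrization is to be replaced by $\xi^{d_{ik}}\zeta^{d_{jk}}$, which is well-defined on the full group.

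The main obstacle will be this last reformulation: justifying the passage from the natural residue sum over $\ker\phi_{ij}$, with root-of-unity factor $\xi^{a_{1k}}\zeta^{a_{2k}}$, to the symmetric sum over the full group of pairs of $d_{ij}$th roots of unity, with factor $\xi^{d_{ik}}\zeta^{d_{jk}}$ as written in the statement. This equivalence rests on the Pl\"{u}cker relations \eqref{eq:Pluecker} together with character-theoretic arguments showing that both sums produce the same rational function in $t$ via equidistribution over the quotient of the full group by $\ker\phi_{ij}$.
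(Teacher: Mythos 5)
Your outline takes a genuinely different route from the paper, which evaluates the Molien integral as an iterated pair of one-variable residue computations (first in $z_1$ after the substitution $z_2 = w^N$, then in $w$) rather than as a single two-dimensional residue at codimension-two intersection points. The skeleton of your computation is sound: the Jacobian of $(z^{\mathbf{a}_i},z^{\mathbf{a}_j})$ does produce the factor $d_{ij}$, the two non-vanishing partner factors do evaluate to $(1-t^2)^2$, and Cramer's rule $d_{ij}\mathbf{a}_k = -d_{jk}\mathbf{a}_i + d_{ik}\mathbf{a}_j$ does give the correct exponents of $t$. The serious gap is precisely where you write that ``after a suitable deformation of the contour'' the contributions come only from intersections $z^{\mathbf{a}_i}=t^{-1}$, $z^{\mathbf{a}_j}=t$ with $d_{ij}>0$. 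Deciding which codimension-two poles contribute to such a torus integral is the content of a Jeffrey--Kirwan-type residue prescription, is sensitive to the arrangement of the weight vectors, and is exactly where the standard-form hypothesis $a_{1i}>0$ must enter; it cannot be dispensed with by invoking an unspecified contour move. In the paper's iterated computation the selection happens automatically (one tracks which poles lie inside the unit disk at each stage), and what one actually finds is that intersections of the type $z^{\mathbf{a}_i}=t$, $z^{\mathbf{a}_j}=t$ \emph{do} contribute nonzero residues; the final formula has the shape you claim only because those contributions cancel in pairs (the sums of $R_2^\prime$ and $-R_2^\prime$ in the paper's notation, matched by swapping the roles of $i$ and $j$). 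Your proposal never sees this cancellation, so your selection rule is an unproved assertion that happens to coincide with the outcome of a nontrivial argument.

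The second gap is the one you flag yourself: converting the residue sum over the $d_{ij}$ solutions (a torsor over $\ker\phi_{ij}$, with root-of-unity data given by characters $\chi_k$ of that kernel) into the symmetric sum over all $d_{ij}^2$ pairs $(\xi,\zeta)$ of $d_{ij}$th roots of unity weighted by $1/d_{ij}^2$, with $\chi_k$ replaced by $\xi^{d_{ik}}\zeta^{d_{jk}}$. This step is provable: the key input is that $(\xi,\zeta)\mapsto(\xi^{d_{ik}}\zeta^{d_{jk}})_{k\neq i,j}$ is exactly $d_{ij}$-to-one by Proposition~\ref{prop:NumSolsCongruence} (which indeed rests on the Pl\"{u}cker relations), so the symmetric sum overcounts each value by the factor $d_{ij}$ that converts $1/d_{ij}^2$ into $1/d_{ij}$; but you would still need to identify the image of this map with the actual multiset of character values arising on the solution set, and you have not done so. Until both the pole-selection argument (with its hidden cancellation) and this reindexing are supplied, the proposal remains an outline rather than a proof.
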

\begin{proof}
For $t=0$, the formula holds trivially, as we have $\Hilb_A^{\on}(0)=\dim_\R \R =1$. Thus, we may assume $t\ne 0$.
By the Molien-Weyl Theorem \cite[Section~4.6.1]{DerksenKemperBook}, the Hilbert series of the
off-shell invariants is given by the iterated integral over the torus $\T^2$
\begin{equation}
\label{eq:GenHilbIntegral}
    \frac{1}{(2\pi\sqrt{-1})^2} \int\limits_{\Sp^1} \int\limits_{\Sp^1}
        \frac{dz_1 dz_2}{z_1 z_2
            \prod\limits_{i=1}^n (1 - t z_1^{a_{1i}} z_2^{a_{2i}})(1 - t z_1^{-a_{1i}} z_2^{-a_{2i}})}.
\end{equation}
In order to compute this integral, we define $N = \prod_{i=1}^n a_{1i}$ and perform the substitution
$z_2 = w^N$ to yield
\[
    \frac{1}{(2\pi\sqrt{-1})^2} \int\limits_{\Sp^1} \int\limits_{\Sp^1}
        \frac{dz_1 dw}{z_1 w
            \prod\limits_{i=1}^n (1 - t z_1^{a_{1i}} w^{N a_{2i}})(1 - t z_1^{-a_{1i}} w^{- Na_{2i}})}.
\]
Assume $|t| < 1$ and $|w| = 1$ and define the integrand
\[
    F_{t,w}(z)  =   \frac{1}
        {zw \prod\limits_{i=1}^n (1 - t z^{a_{1i}} w^{N a_{2i}})(1 - t z^{-a_{1i}} w^{- Na_{2i}})}.
\]
We first consider the integral of $F_{t,w}(z)$ over $z \in \Sp^1$.

Note that as each $a_{1i} > 0$, we can express
\[
    F_{t,w}(z)  =   \frac{z^{-1 + \sum_{i=1}^n a_{1i}} }
        {w\prod\limits_{i=1}^n (1 - t z^{a_{1i}} w^{N a_{2i}})(z^{a_{1i}} - t w^{- Na_{2i}})}
\]
to see that $F_{t,w}(z)$ is holomorphic at $z = 0$. As $|t| < 1$ and $|w| = 1$, each of the factors
$(1 - t z^{a_{1i}} w^{N a_{2i}})$ is nonzero on the unit disk. Hence, the relevant poles are solutions
to $z^{a_{1i}} - t w^{- Na_{2i}} = 0$, of the form
$z = \eta t^{1/a_{1i}} w^{- Na_{2i}/a_{1i}}$ where $\eta$ is a fixed $a_{1i}$th root of unity.
Note that as $|\eta t^{1/a_{1i}} w^{- Na_{2i}/a_{1i}}| = |t|^{1/a_{1i}}$ and $A$ is completely generic, the poles are
distinct, i.e., each $i$ and $a_{1i}$th root of unity $\eta$ corresponds to a distinct pole.

Fix an $i$ and express
\[
    F_{t,w}(z)  =   \frac{z^{a_{1i} - 1}}
        {w(1 - t z^{a_{1i}} w^{N a_{2i}})(z^{a_{1i}} - t w^{- Na_{2i}})
        \prod\limits_{\substack{j=1 \\ j \neq i}}^n (1 - t z^{a_{1j}} w^{N a_{2j}})(1 - t z^{-a_{1j}} w^{- Na_{2j}})}.
\]
Fix an $a_{1i}$th root of unity $\eta_0$, expand the factor
\begin{align*}
    (z^{a_{1i}} - t w^{- Na_{2i}})
    &=      (z - \eta_0 t^{1/a_{1i}} w^{- Na_{2i}/a_{1i}})
            \prod\limits_{\substack{\eta^{a_{1i}}=1 \\ \eta\neq\eta_0}}
                (z - \eta t^{1/a_{1i}} w^{- Na_{2i}/a_{1i}}),
\end{align*}
and note that
\begin{align*}
    \prod\limits_{\substack{\eta^{a_{1i}}=1 \\ \eta\neq\eta_0}}
        (\eta_0 t^{1/a_{1i}} w^{- Na_{2i}/a_{1i}} - \eta t^{1/a_{1i}} w^{- Na_{2i}/a_{1i}})
    &=      \big(\eta_0 t^{1/a_{1i}} w^{- Na_{2i}/a_{1i}}\big)^{a_{1i} - 1}
                \prod\limits_{\substack{\eta^{a_{1i}}=1 \\ \eta\neq 1}} (1 - \eta )
    \\&=    a_{1i} \big(\eta_0 t^{1/a_{1i}} w^{- Na_{2i}/a_{1i}}\big)^{a_{1i} - 1}.
\end{align*}
Therefore, the residue of $F_{t,w}(z)$ at $z = \eta_0 t^{1/a_{1i}} w^{- Na_{2i}/a_{1i}}$ is given by
\[
    \frac{1}
        {wa_{1i}(1 - t^2)
            \prod\limits_{j\neq i} (1 - \eta_0^{a_{1j}} t^{1 + a_{1j}/a_{1i}} w^{N (a_{2j} - a_{1j} a_{2i}/a_{1i})} )
                (1 - \eta_0^{-a_{1j}} t^{1 - a_{1j}/a_{1i}} w^{- N(a_{2j} - a_{1j} a_{2i}/a_{1i})} )}.
\]
Letting $q_i = \prod_{j\neq i} a_{1j} = N/a_{1i}$, we can express this residue as
\[
    \frac{1}
        {wa_{1i}(1 - t^2)
            \prod\limits_{j\neq i} (1 - \eta_0^{a_{1j}} t^{1 + a_{1j}/a_{1i}} w^{q_i d_{ij}} )
                (1 - \eta_0^{-a_{1j}} t^{1 - a_{1j}/a_{1i}} w^{- q_i d_{ij}} )}.
\]
Summing residues over each choice of $i$ and corresponding roots of unity $\eta$, the outer integral is given by
\begin{equation}
\label{eq:GenHilbFirstIntegral}
    (2\pi\sqrt{-1}) \sum_{i=1}^n \sum_{\eta^{a_{1i}} = 1}
    \frac{1}{wa_{1i} (1 - t^2)
        \prod\limits_{j\neq i}
            (1 - \eta^{a_{1j}} t^{1 + a_{1j}/a_{1i}} w^{q_i d_{ij}})
            (1 - \eta^{-a_{1j}} t^{1 - a_{1j}/a_{1i}} w^{- q_i d_{ij}})}.
\end{equation}
Note that formally $t^{1/a_{1i}}$ is well-defined only after fixing a branch
of the logarithm. However, Expression~\eqref{eq:GenHilbFirstIntegral} sums over all the distinct $a_{1i}$th
roots of $t$ and is therefore well-defined independently of the chosen branch.
We set
\begin{equation}
\label{eq:GenHilbDefBeta}
    \beta_{ij}(\eta,w) =
        (1 - \eta^{a_{1j}} t^{1 + a_{1j}/a_{1i}} w^{q_i d_{ij}})
        (1 - \eta^{-a_{1j}} t^{1 - a_{1j}/a_{1i}} w^{- q_i d_{ij}})
\end{equation}
and then can express \eqref{eq:GenHilbFirstIntegral} succinctly as
\[
    (2\pi\sqrt{-1}) \sum_{i=1}^n \sum_{\eta^{a_{1i}} = 1} \frac{1}
        {w a_{1i} (1 - t^2) \prod\limits_{j\neq i} \beta_{ij}(\eta,w)}.
\]
Note that for fixed $t$, this function is rational in $w$.

Fix a value of $i$ and an $a_{1i}$th root of unity $\eta$. We now proceed with the integral
of the corresponding term of Expression~\eqref{eq:GenHilbFirstIntegral} with respect to $w$.

For each $j$, the first factor of $\beta_{ij}(\eta,w)$ has a root on the unit disk if and only if $d_{ij} < 0$,
while the second factor has a root if and only if $1 - a_{1j}/a_{1i}$ and $d_{ij}$ have the same sign.
Note also that $1 - a_{1j}/a_{1i} = 0$ is impossible as $A$ is completely generic (and hence in standard form).

We consider the roots of the first factor of $\beta_{ij}(\eta,w)$.
Assume $d_{ij} < 0$. Express $1/\big(wa_{1i}(1 - t^2) \prod\limits_{j\neq i} \beta_{ij}(\eta,w)\big)$ as
\[
    \frac{w^{- q_i d_{ij} - 1}}
        {a_{1i}(1 - t^2)(w^{- q_i d_{ij}} - \eta^{a_{1j}} t^{1 + a_{1j} /a_{1i}})
        (1 - \eta^{-a_{1j}} t^{1 - a_{1j}/a_{1i}} w^{-q_i d_{ij}}) \prod\limits_{k\neq i,j} \beta_{ik}(\eta,w)  },
\]
and then the factor $(w^{- q_i d_{ij}} - \eta^{a_{1j}} t^{1 + a_{1j} /a_{1i}})$ in the denominator can be expressed as
\[
    (w - \nu_0 \eta^{- a_{1j} /(q_i d_{ij})} t^{- (1 + a_{1j}/a_{1i})/(q_i d_{ij})})
    \prod\limits_{\substack{ \nu^{-q_i d_{ij}} = 1 \\ \nu \neq \nu_0 }}
        (w - \nu \eta^{- a_{1j} / (q_i d_{ij})} t^{- (1 + a_{1j}/a_{1i})/(q_i d_{ij})})
\]
where $\nu_0$ is a $-q_i d_{ij}$th root of unity. Hence, poles corresponding to the vanishing of the first factor
of $\beta_{ij}(\eta,w)$ are of the form
$\tau_1(i,j,\eta,\nu_0) := \nu_0 \eta^{- a_{1j} /(q_i d_{ij})} t^{- (1 + a_{1j}/a_{1i})/(q_i d_{ij})}$.
Note that $|\tau_1(i,j,\eta,\nu_0)| = |t|^{- (1 + a_{1j}/a_{1i})/(q_i d_{ij})} = |t|^{- (a_{1i} + a_{1j})/(N d_{ij})}$,
and, for $j\neq k$ (and both distinct from $i$), we have
$(a_{1i} + a_{1j})/d_{ij} = (a_{1i} + a_{1k})/d_{ik}$ if and only if $d_{ij} - d_{ik} - d_{jk} = 0$.
That is, the hypothesis that $A$ is completely generic implies that the poles
$\{ \tau_1(i,j,\eta,\nu_0) : j\neq i, \nu_0^{-q_i d_{ij}} = 1 \}$ are distinct.

The residue at $\tau_1 = \tau_1(i,j,\eta,\nu_0)$ is given by
\begin{align*}
    &\frac{\tau_1^{- q_i d_{ij} - 1}}{a_{1i}(1 - t^2)
        \prod\limits_{\substack{ \nu^{-q_i d_{ij}} = 1 \\ \nu \neq \nu_0 }}
            (\tau_1 - \nu \eta^{- a_{1j} / (q_i d_{ij})} t^{- (1 + a_{1j}/a_{1i})/(q_i d_{ij})})
            (1 - \eta^{-a_{1j}} t^{1 - a_{1j} / a_{1i}} \tau_1^{-q_i d_{ij}})
            \prod\limits_{k\neq i,j} \beta_{ik}(\eta,\tau_1)    }
    \\&\quad\quad\quad=
    \frac{-1}
        {N d_{ij} (1 - t^2)^2
            \prod\limits_{k\neq i,j} \beta_{ik}(\eta,\tau_1)  }.
\end{align*}
Substituting $\tau_1 = \nu_0 \eta^{- a_{1j} /(q_i d_{ij})} t^{- (1 + a_{1j}/a_{1i})/(q_i d_{ij})}$
into the definition of $\beta_{ik}$ in Equation~\eqref{eq:GenHilbDefBeta}, we have
\begin{align*}
    \beta_{ik}(\eta,\tau_1)
    &=   \Big(1 - \nu_0^{q_i d_{ik}} \eta^{a_{1k} - a_{1j}d_{ik} / d_{ij} }
            t^{(d_{ij}(a_{1i} + a_{1k}) - d_{ik}(a_{1i} + a_{1j}))/(d_{ij}a_{1i}) }\Big)\cdot
    \\&\quad\quad\quad
        \Big(1 - \nu_0^{- q_i d_{ik}} \eta^{-a_{1k} + a_{1j}d_{ik} / d_{ij} }
            t^{(d_{ij}(a_{1i} - a_{1k}) + d_{ik}(a_{1i} + a_{1j}))/(d_{ij}a_{1i}) }\Big).
\end{align*}
Simplifying the exponents using the identity $a_{1i}d_{jk} + a_{1j}d_{ki} + a_{1k}d_{ij} = 0$,
we express this residue as
\[
    R_1 =
    \frac{-1}
        {N d_{ij} (1 - t^2)^2
            \prod\limits_{k\neq i,j}
                \big(1 -
                    \nu_0^{q_i d_{ik}}
                    \eta^{- a_{1i} d_{jk} /d_{ij} }
                    t^{(d_{ij} - d_{ik} - d_{jk})/d_{ij} }
                    \big)
                \big(1 -
                    \nu_0^{- q_i d_{ik}}
                    \eta^{a_{1i} d_{jk}/ d_{ij} }
                    t^{(d_{ij} + d_{ik} + d_{jk})/d_{ij} }
                \big)  }.
\]
Let $\zeta = \eta^{-a_{1i}/d_{ij}}$ and $\xi = \nu_0^{q_i}$, and then
\[
    \sum_{\substack{\eta^{a_{1i}} = 1 \\ \nu_0^{-q_i d_{ij}} = 1}} R_1
    =
    \sum_{\substack{\zeta^{-d_{ij}} = 1 \\ \xi^{-d_{ij}} = 1 }} R_1^\prime
\]
where
\begin{equation}
\label{eq:GenHilbSecondIntegralRes1}
    R_1^\prime
    =
    \frac{1}
        {d_{ij}^2 (1 - t^2)^2
            \prod\limits_{k\neq i,j}
                \big(1 - \xi^{d_{ik}}
                    \zeta^{d_{jk}}
                    t^{(d_{ij} - d_{ik} - d_{jk})/d_{ij} }
                    \big)
                \big(1 -
                    \xi^{- d_{ik}}
                    \zeta^{- d_{jk}}
                    t^{(d_{ij} + d_{ik} + d_{jk})/d_{ij} }
                \big)  }.
\end{equation}
Once again, the formalism of choosing a fixed branch of the logarithm for the substitution $\zeta = \eta^{-a_{1i}/d_{ij}}$
was replaced here by the process of averaging over distinct roots of unity.

We now turn to roots of the second factor of $\beta_{ij}(\eta,w)$. First assume
$d_{ij} > 0$ and $1 - a_{1j}/a_{1i} > 0$, i.e., $a_{1i} > a_{1j}$. We express the integrand
$1/\big(w a_{1i}(1 - t^2) \prod\limits_{j\neq i} \beta_{ij}(\eta,w)\big)$ as
\[
    \frac{w^{q_i d_{ij} - 1}}
        {a_{1i}(1 - t^2)(1 - \eta^{a_{1j}} t^{1 + a_{1j}/a_{1i}} w^{q_i d_{ij}})
            (w^{q_i d_{ij}} - \eta^{-a_{1j}} t^{1 - a_{1j}/a_{1i}}) \prod\limits_{k\neq i,j} \beta_{ik}(\eta,w) },
\]
and factor $(w^{q_i d_{ij}} - \eta^{-a_{1j}} t^{1 - a_{1j}/a_{1i}})$ into
\[
    (w - \nu_0\eta^{-a_{1j}/(q_i d_{ij})}t^{(1 - a_{1j}/a_{1i})/(q_i d_{ij})})
                \prod\limits_{\substack{\nu^{q_i d_{ij}=1} \\ \nu\neq\nu_0}}
            (w - \nu \eta^{-a_{1j}/(q_i d_{ij})}t^{(1 - a_{1j}/a_{1i})/(q_i d_{ij})}),
\]
where $\nu_0$ is a $q_i d_{ij}$th root of unity. The corresponding simple poles occur when $w$
is equal to $\tau_2(i,j,\eta,\nu_0):= \nu_0\eta^{-a_{1j}/(q_i d_{ij})} t^{(1 - a_{1j}/a_{1i})/(q_i d_{ij})}$.
As $|\tau_2(i,j,\eta,\nu_0)| = |t|^{(a_{1i} - a_{1j})/(N d_{ij})}$,
$(a_{1i} - a_{1j})/d_{ij} = (a_{1i} - a_{1k})/d_{ik}$ if and only if $d_{ij} - d_{ik}  + d_{jk} = 0$, and
$(a_{1i} - a_{1j})/d_{ij} = -(a_{1i} + a_{1k})/d_{ik}$ if and only if $d_{ij} + d_{ik} - d_{jk} = 0$,
the fact that $A$ is completely generic implies that these poles are all distinct, and are distinct from the poles $\tau_1$ above.
A computation similar to the previous case expresses the residue as
\[
    R_2 =
    \frac{1}{N d_{ij} (1 - t^2)^2
        \prod\limits_{k\neq i,j}
            \big(1 - \nu_0^{q_i d_{ik}} \eta^{-a_{1i}d_{jk}/d_{ij}}
                t^{(d_{ij} + d_{ik} - d_{jk})/ d_{ij}}\big)
            \big(1 - \nu_0^{-q_i d_{ik}} \eta^{a_{1i}d_{jk}/d_{ij}}
                t^{(d_{ij} - d_{ik}  + d_{jk})/ d_{ij} } \big) }.
\]
Applying the same substitutions as in the previous case, we have
\[
    \sum_{\substack{\eta^{a_{1i}} = 1 \\ \nu_0^{q_i d_{ij}} = 1}} R_2
    =
    \sum_{\substack{\zeta^{d_{ij}} = 1 \\ \xi^{d_{ij}} = 1 }} R_2^\prime
\]
where
\begin{equation}
\label{eq:GenHilbSecondIntegralRes2}
    R_2^\prime =
    \frac{1}{d_{ij}^2 (1 - t^2)^2
        \prod\limits_{k\neq i,j}
            \big(1 - \xi^{d_{ik}} \zeta^{d_{jk}}
                t^{(d_{ij} + d_{ik} - d_{jk})/ d_{ij}}\big)
            \big(1 - \xi^{- d_{ik}} \zeta^{-d_{jk}}
                t^{(d_{ij} - d_{ik}  + d_{jk})/ d_{ij} } \big) }.
\end{equation}
If $d_{ij} < 0$ and $1 - a_{1j}/a_{1i} < 0$, i.e., $a_{1i} < a_{1j}$, a practically identical computation identifies
again simple poles of the form $\tau_2(i,j,\eta,\nu_0) = \nu_0 \eta^{- a_{1j} / (q_i d_{ij})} t^{(1 - a_{1j} / a_{1i})/ (q_i d_{ij})}$
with residue $R_2$, while our standard substitution results in the slightly modified equation

\[
    \sum_{\substack{\eta^{a_{1i}} = 1 \\ \nu_0^{q_i d_{ij}} = 1}} R_2
    =
    \sum_{\substack{\zeta^{d_{ij}} = 1 \\ \xi^{d_{ij}} = 1 }} -R_2^\prime.
\]
Combining these computations, it follows that the integral in Equation~\eqref{eq:GenHilbIntegral}
is given by
\[
    (2\pi\sqrt{-1})^2 \sum\limits_{i=1}^n \left(
        \sum\limits_{\substack{ j = 1 \\ j \neq i, \: d_{ij} < 0}}^n \quad
        \sum_{\substack{\zeta^{-d_{ij}} = 1 \\ \xi^{-d_{ij}} = 1 }} R_1^\prime
        + \sum\limits_{\substack{j = 1 \\  j \neq i, \: d_{ij} > 0 \\ a_{1i} > a_{1j}}}^n \quad
        \sum_{\substack{\zeta^{d_{ij}} = 1 \\ \xi^{d_{ij}} = 1 }} R_2^\prime
        + \sum\limits_{\substack{j = 1 \\  j \neq i, \: d_{ij} < 0 \\ a_{1i} < a_{1j}}}^n \quad
        \sum_{\substack{\zeta^{d_{ij}} = 1 \\ \xi^{d_{ij}} = 1 }} -R_2^\prime
    \right).
\]
Switching the roles of $i$ and $j$ as well as substituting $\zeta \mapsto \xi^{-1}$ and $\xi \mapsto \zeta^{-1}$ in the third sum yields the negative of
the second sum, leaving only the first sum. Then switching the roles of $i$ and $j$ as well as $\zeta$ and $\xi$
in the first sum, the off-shell Hilbert series is given by
\[
    \sum_{\substack{i\neq j, \\ d_{ij} > 0}} \quad
        \sum\limits_{\substack{\xi^{d_{ij}}=1 \\ \zeta^{d_{ij}} = 1}}
        \frac{1}{d_{ij}^2 (1 - t^2)^2
            \prod\limits_{k\neq i,j}
                \big(1 - \xi^{d_{ik}} \zeta^{d_{jk}}
                    t^{(d_{ij} + d_{ik} + d_{jk})/d_{ij} } \big)
                \big(1 - \xi^{-d_{ik}} \zeta^{-d_{jk}}
                    t^{(d_{ij} - d_{ik} - d_{jk})/d_{ij} } \big)  }.
\]
Applying Equation~\eqref{eq:OnOffHilb} (\cite[Lemma~2.1]{HerbigSeaton}),
$\Hilb_A^{\on}(t)$ is the product of $(1 - t^2)^2$ and the off-shell Hilbert series, completing the proof.
\end{proof}


\subsection{Analytic continuation}
\label{subsec:HilbDegenerate}

Revisiting Theorem~\ref{thrm:HilbGeneric}, there is no particular reason why the final expression
\eqref{eq:HilbGeneric} should depend on the additional condition $d_{ij}+d_{jk}+d_{ki}\ne 0$ for every distinct $i,j,k$.
Yet again, if $A$ is degenerate, then there are distinct $i,j,k$ such that $d_{ij} - d_{ik} - d_{jk} = 0$,
and Expression \eqref{eq:HilbGeneric} fails to be well-defined due to division by zero in the case of $\xi =\zeta =1$.
Specifically, as $(a_{1i} + a_{1j})/d_{ij} =  (a_{1i} + a_{1k})/d_{ik}$, the poles
identified in the computation in the proof of Theorem~\ref{thrm:HilbGeneric} are not distinct and hence are not simple poles.
Hence, the computation does not apply. Nevertheless, the result of Theorem~\ref{thrm:HilbGeneric} can be extended to the case
of general generic and degenerate $A$ with the help of analytic continuation.

\begin{lemma}
\label{lem:RemovSing}
Let $C$ be a simple closed curve, let $f(z)$ be a continuous function on $C$,
and let $\tau$ be interior to $C$. Then
\[
    \lim\limits_{(\tau_1,\ldots,\tau_m)\to(\tau,\ldots,\tau)}
    \int\limits_C \frac{f(z)\,dz}{\prod\limits_{i=1}^m (z - \tau_i)}
    =
    \int\limits_C \frac{f(z)\,dz}{(z - \tau)^m}.
\]
\end{lemma}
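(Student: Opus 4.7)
The plan is to establish uniform convergence of the integrand on $C$ as $(\tau_1,\ldots,\tau_m)\to(\tau,\ldots,\tau)$, so that the limit may be interchanged with the contour integral. Since $\tau$ lies in the interior of $C$ and $C$ is compact, the distance $2\delta := \min_{z\in C}|z-\tau|$ is strictly positive. Restricting to the open neighborhood $|\tau_i - \tau| < \delta$ for each $i$, the triangle inequality yields $|z - \tau_i| \ge \delta$ for every $z\in C$ and every $i$, so the integrand is uniformly bounded and, in particular, the integral on the left-hand side is well-defined.

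Next, I would estimate the pointwise difference of the two integrands. Writing
\[
    \frac{1}{\prod_{i=1}^m(z-\tau_i)} - \frac{1}{(z-\tau)^m}
    =
    \frac{(z-\tau)^m - \prod_{i=1}^m(z-\tau_i)}{(z-\tau)^m\prod_{i=1}^m(z-\tau_i)},
\]
the denominator is bounded below in absolute value by $\delta^{2m}$ uniformly in $z\in C$. The numerator is a polynomial in $z$, $\tau$, and $\tau_1,\ldots,\tau_m$ that vanishes identically when $\tau_1=\cdots=\tau_m=\tau$; as $z$ ranges over the compact set $C$, it therefore tends to $0$ uniformly in $z$ as $(\tau_1,\ldots,\tau_m)\to(\tau,\ldots,\tau)$. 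Using that $|f(z)|\le M$ for some $M > 0$ by continuity of $f$ on the compact curve $C$, one obtains
\[
    \left| \frac{f(z)}{\prod_{i=1}^m(z-\tau_i)} - \frac{f(z)}{(z-\tau)^m} \right|
    \le
    \frac{M}{\delta^{2m}} \left| (z-\tau)^m - \prod_{i=1}^m(z-\tau_i) \right|
    \xrightarrow{(\tau_1,\ldots,\tau_m)\to(\tau,\ldots,\tau)} 0
\]
uniformly in $z\in C$.

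Finally, uniform convergence of continuous integrands on a rectifiable curve of finite length $L$ permits interchanging the limit with the contour integral via the standard $ML$-estimate, yielding the claimed equality. There is no real obstacle in the argument; the only subtlety is ensuring the denominator is bounded away from zero uniformly on $C$, which is precisely what the hypothesis that $\tau$ is interior to $C$ guarantees once the $\tau_i$ are forced to stay within distance $\delta$ of $\tau$.
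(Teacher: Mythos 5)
Your proof is correct and follows essentially the same route as the paper's: both arguments rest on the observation that, since $\tau$ is interior to the compact curve $C$, the denominator is bounded away from zero uniformly on $C$ once the $\tau_i$ are confined to a small closed neighborhood of $\tau$, after which the limit may be passed inside the integral. The paper invokes the dominated convergence theorem on the compact set $C\times D^m$ where you instead establish uniform convergence of the integrand and apply an $ML$-estimate, but this is an inessential difference.
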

\begin{proof}
Let $g(z,\tau_1,\ldots,\tau_m) = f(z)/\big(\prod_{i=1}^m (z - \tau_i)\big)$ denote the integrand
as a function of $z$ and the $\tau_i$. Let $D$ denote a closed $\epsilon$-ball about $\tau$ that is contained in
the interior of $C$, and then $g(z,\tau_1,\ldots,\tau_m)$ is continuous on the compact set $C\times D^m$.
It follows that $g(z,\tau_1,\ldots,\tau_m)$ is bounded by a constant on this set, and the result follows
from an application of the dominated convergence theorem.
\end{proof}

If $f(z) = g(z)/h(z)$ is a rational function, where $h(z)$ has no zeros on or inside $C$,
then we can understand the limit in Lemma~\ref{lem:RemovSing} as
follows. Choosing the $\tau_i$ distinct inside $C$, we have
\[
    \frac{1}{2\pi\sqrt{-1}}\int\limits_C \frac{g(z)\,dz}{h(z)\prod\limits_{i=1}^m (z - \tau_i)}
    =
    \sum\limits_{i=1}^m \frac{g(\tau_i)}{h(\tau_i)\prod\limits_{\substack{j=1\\ j\neq i}}^m (\tau_i - \tau_j)},
\]
which we rewrite as a single rational fraction $\frac{p(\tau_1,\ldots,\tau_m)}{q(\tau_1,\ldots,\tau_m)}$ with common denominator
\[
    q(\tau_1,\ldots,\tau_m) = \prod_{i=1}^m h(\tau_i)\prod_{1\leq j < k \leq m} (\tau_k - \tau_j).
\]
Note that by definition $\frac{p(\tau_1,\ldots,\tau_m)}{q(\tau_1,\ldots,\tau_m)}$ is symmetric in the $\tau_i$ while
$q(\tau_1,\ldots,\tau_m)$ is alternating. Therefore, the numerator $p(\tau_1,\ldots,\tau_m)$ is an alternating polynomial
in the $\tau_i$ and hence divisible by the Vandermonde determinant $\prod_{1\leq j < k \leq m} (\tau_k - \tau_j)$, i.e.,
\[
    p(\tau_1,\ldots,\tau_m) = s(\tau_1,\ldots,\tau_m)\prod\limits_{1\leq j < k \leq m} (\tau_k - \tau_j)
\]
for some symmetric polynomial $s$ in the $\tau_i$. Therefore, the singularities at $\tau_i = \tau_j$ are removable,
and we can express the integral as
\[
    \frac{p(\tau_1,\ldots,\tau_m)}{q(\tau_1,\ldots,\tau_m)}
    =
    \frac{s(\tau_1,\ldots,\tau_m)}{\prod_{i=1}^m h(\tau_i)}.
\]

In the proof of Theorem~\ref{thrm:HilbGeneric}, each of the integrands of the iterated integral is a rational
function. Hence, using Lemma~\ref{lem:RemovSing}, we can perturb the poles with multiplicity and apply the
same computation. In more detail, in the integral with respect to $z$, if multiple poles that are solutions of
factors of the form $z^{a_{1i}} - t w^{- Na_{2i}} = 0$ coincide, we may perturb these factors by replacing $t$
with a separate variable $t_i$ in each, resulting in a rational function in $z$ that has only
simple poles in the unit disk. We then compute the integral of this function and then take the limit as the
$t_i \to t$. Similarly, in the second integral with respect to $w$, we may similarly perturb $t$ in factors
associated to common poles, compute the integral of the resulting
rational function with only simple poles, and then take the limit as these perturbed variables return to $t$.
In order to state the resulting formula, we express the perturbed variables $t_i$ in the form $t_i = t^{p_i}$
where $p_i$ is near $1$ and the exponent is defined using a fixed branch of the logarithm that is defined
on a neighborhood of $t$. See \cite[page~52]{HerbigSeaton} for more details on this approach in a similar
computation. Then, taking advantage of the continuity of power functions within the domain of the
fixed branch of $\log$, we have the following.

\begin{corollary}
\label{cor:HilbDegenerate}
Let $n > 2$ and let $A\in\Z^{2\times n}$ be a faithful weight matrix in standard form.
The Hilbert series $\Hilb_A^{\on}(t)$ is given by
\begin{equation}
\label{eq:HilbDegenerate}
    \lim\limits_{X\to A}
    \sum_{\substack{i\neq j, \\ d_{ij} > 0}} \quad
        \sum\limits_{\substack{\xi^{d_{ij}}=1 \\ \zeta^{d_{ij}} = 1}}
        \frac{1}{d_{ij}^2
            \prod\limits_{k\neq i,j}
                \big(1 - \xi^{d_{ik}} \zeta^{d_{jk}}
                    t^{(c_{ij} + c_{ik} + c_{jk})/c_{ij} } \big)
                \big(1 - \xi^{-d_{ik}} \zeta^{-d_{jk}}
                    t^{(c_{ij} - c_{ik} - c_{jk})/c_{ij} } \big)  }
\end{equation}
where the $x_{ij}$ are real parameters approximating the $a_{ij}$, $X = (x_{ij})$,
$c_{ij} = x_{1i} x_{2j} - x_{2i} x_{1j}$, and the power functions are computed using a fixed branch of the logarithm.
In particular,
\begin{equation}
\label{eq:HilbGeneric2}
    \Hilb_A^{\on}(t)=\sum_{\substack{i\neq j, \\ d_{ij} > 0}} \quad
        \sum\limits_{\substack{\xi^{d_{ij}}=1 \\ \zeta^{d_{ij}} = 1}}
        \frac{1}{d_{ij}^2
            \prod\limits_{k\neq i,j}
                \big(1 - \xi^{d_{ik}} \zeta^{d_{jk}}
                    t^{(d_{ij} + d_{ik} + d_{jk})/d_{ij} } \big)
                \big(1 - \xi^{-d_{ik}} \zeta^{-d_{jk}}
                    t^{(d_{ij} - d_{ik} - d_{jk})/d_{ij} } \big)  }.
\end{equation}
holds for any faithful generic weight matrix $A$.
\end{corollary}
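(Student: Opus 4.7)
The plan is to extend the residue computation of Theorem~\ref{thrm:HilbGeneric} by perturbing $A$ to a real weight matrix $X$ that is ``effectively completely generic'' and then taking a limit. For arbitrary faithful $A$ in standard form, the Molien-Weyl expression \eqref{eq:GenHilbIntegral} for $(1-t^2)^{-2}\Hilb_A^{\on}(t)$ is still a well-defined iterated integral for $|t|<1$; the only obstruction to applying Theorem~\ref{thrm:HilbGeneric} directly is that when $A$ fails to be completely generic, poles in the inner $z$-integral (from factors $z^{a_{1i}}-tw^{-Na_{2i}}=0$) or in the outer $w$-integral can coalesce, so that the simple-pole residue formula ceases to apply verbatim.

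To resolve this, I would introduce perturbations $t_i = t^{p_i}$ with real $p_i$ near $1$, with the exponent defined through a fixed branch of the logarithm on a neighborhood of $t$, and replace $t$ by $t_i$ in each of the $n$ factors $(1-tz_1^{a_{1i}}z_2^{a_{2i}})$ of the Molien-Weyl integrand. Equivalently, this amounts to perturbing the entries $a_{ij}$ to real numbers $x_{ij}$ close to $a_{ij}$, so that the minors $c_{ij}$ of $X=(x_{ij})$ replace $d_{ij}$ in every exponent of $t$ that arises throughout the computation. For $X$ in an open dense subset of a small neighborhood of $A$, all pole locations identified in the proof of Theorem~\ref{thrm:HilbGeneric} become distinct, so the residue calculation proceeds unchanged except for two key bookkeeping modifications: the roots of unity $\xi,\zeta$ still range over $d_{ij}$th roots of unity, since these come from the factorizations $z^{a_{1i}}-\cdots$ and $w^{q_id_{ij}}-\cdots$ governed by the integer weights $a_{ij}$, whereas every exponent of $t$ is expressed in terms of the perturbed $c_{ij}$.

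Carrying out the perturbed residue computation term by term following the proof of Theorem~\ref{thrm:HilbGeneric}, including the sign-matching that reduces the final sum to the $d_{ij}>0$ range and the substitutions $\zeta=\eta^{-a_{1i}/d_{ij}}$ and $\xi=\nu_0^{q_i}$ (which remain legitimate in the limit by continuity of the fixed branch of the logarithm), yields the expression inside the limit of \eqref{eq:HilbDegenerate} times the factor $(1-t^2)^{-2}$. Multiplying by $(1-t^2)^2$ via Equation~\eqref{eq:OnOffHilb} gives the perturbed right-hand side of \eqref{eq:HilbDegenerate}. Finally, applying Lemma~\ref{lem:RemovSing} successively to the inner $z$-integral and the outer $w$-integral shows that the limit as $X\to A$ of the perturbed integral equals the original Molien-Weyl integral; hence the right-hand side of \eqref{eq:HilbDegenerate} equals $\Hilb_A^{\on}(t)$. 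The second claim, \eqref{eq:HilbGeneric2} for generic $A$, is then immediate: since generic $A$ satisfies both $d_{ij}+d_{ik}+d_{jk}\neq 0$ and $d_{ij}-d_{ik}-d_{jk}\neq 0$, none of the $t$-exponents vanish at $X=A$, so the summand is continuous at $X=A$ and the limit simply evaluates the expression.

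The main obstacle I anticipate is the careful matching between the perturbation at the level of $t_i=t^{p_i}$ (which is the form in which the hypotheses of Lemma~\ref{lem:RemovSing} are most naturally verified) and the parameterization of the statement in terms of the perturbed matrix $X$ with minors $c_{ij}$. One must check that the perturbed exponents of $t$ arising naturally from the residue computation, which are of the form $(a_{1i}\pm a_{1j})/(a_{1i}d_{ij})$ times appropriate combinations of perturbation parameters, reduce to $(c_{ij}\pm c_{ik}\pm c_{jk})/c_{ij}$ after applying the Plücker-style identity $a_{1i}d_{jk}+a_{1j}d_{ki}+a_{1k}d_{ij}=0$ with the $d$'s replaced by $c$'s and $a_{1\bullet}$ by $x_{1\bullet}$. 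Once this identification is verified, continuity of the power functions on the chosen branch concludes the argument.
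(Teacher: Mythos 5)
Your proposal follows essentially the same route as the paper: perturb the coalescing poles by writing $t_i = t^{p_i}$ (equivalently, perturbing the weights $a_{ij}$ to real $x_{ij}$), rerun the residue computation of Theorem~\ref{thrm:HilbGeneric} with the roots of unity still governed by the integer minors $d_{ij}$ while the $t$-exponents carry the perturbed minors $c_{ij}$, and justify the limit $X\to A$ via Lemma~\ref{lem:RemovSing} together with continuity of the power functions on a fixed branch of the logarithm. Your treatment of the generic case, and your flagged concern about reconciling the $t^{p_i}$ perturbation with the $c_{ij}$ parameterization via the Pl\"{u}cker-type identity, match the paper's argument, so there is nothing further to add.
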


\begin{remark}
\label{rem:OffShellHilb}
Using Equation~\eqref{eq:OnOffHilb}, Corollary~\ref{cor:HilbDegenerate} also yields a formula for
the Hilbert series $\Hilb_A^{\off}(t)$ of the off-shell invariants of the cotangent-lifted representation
associated to $A$, i.e., the usual real invariants of the representation with weight matrix $A$, or
equivalently the complex invariants of the representation with weight matrix $(A|-A)$.
Explicitly,
\[
    \Hilb_A^{\off}(t) = \frac{\Hilb_A^{\on}(t)}{(1 - t^2)^2}.
\]
\end{remark}


\subsection{An algorithm to compute the Hilbert series}
\label{subsec:Algorithm}

As in the case of circle quotients treated in \cite{HerbigSeaton}, Equation~\eqref{eq:HilbGeneric2} indicates
an algorithm to compute the Hilbert series $\Hilb_A^{\on}(t)$ in the case of a generic weight matrix that we
now describe. First, for a ring $R$ containing $\Q$, let $R(\!(t)\!)$ denote the ring of formal Laurent polynomials
in $t$ over $R$, and for $d\in\N$, define the operator $U_{d,t}\co R(\!(t)\!)\to R(\!(t)\!)$ by
\begin{equation}
\label{eq:UOpPowerSeries}
    U_{d,t} \left(\sum\limits_{m\in\Z} F_m t^m \right) = \sum\limits_{m\in\Z} F_{md} t^m.
\end{equation}
This operator generalizes that defined in \cite[Section~4]{HerbigSeaton} and has similar properties. Specifically,
for $F(t) = \sum_{m\in\Z} F_m t^m$,
\begin{equation}
\label{eq:UOpRootUnity}
    U_{d,t}\big(F(t)\big) = \frac{1}{d}\sum\limits_{\zeta^d = 1} F\big( \zeta\sqrt[d]{t} \big).
\end{equation}
The idea behind the algorithm is to interpret Equation~\eqref{eq:HilbGeneric2} in terms of composing operators
of the form $U_{d,t}$. Specifically, we can write
\[
    \Hilb_A^{\on}(t)
        =   \sum_{\substack{i\neq j, \\ d_{ij} > 0}}
                \big( U_{d_{ij},s} \circ U_{d_{ij},t} \big)(\Phi_{ij}(s,t))\Big|_{s=t}
\]
where
\begin{equation}
\label{eq:AlgPhiDef}
    \Phi_{ij}(s,t)
        :=  \frac{1}{d_{ij}^2 \prod\limits_{k\neq i,j}
                \big(1 - s^{ d_{ik} } t^{ d_{ij} + d_{jk} } \big)
                \big(1 - s^{ -d_{ik} } t^{ d_{ij} - d_{jk} } \big)  }.
\end{equation}
Using Equation~\eqref{eq:UOpRootUnity}, note that if $F(s,t) = P(s,t)/Q(s,t)$ where $P$ and $Q$ are
polynomials in $t$ with coefficients in $\Q[s,s^{-1}]$,
$\delta = \operatorname{deg}_t P(s,t) - \operatorname{deg}_t Q(s,t)$
where $\operatorname{deg}_t$ means the degree as a polynomial in $t$,
then $U_{d,t}\big(F(s,t)\big) = P_d(s,t)/Q_d(s,t)$ where $P_d$ and $Q_d$ are
polynomials in $t$ with coefficients in
$\Q[s,s^{-1}]$ such that
\begin{equation}
\label{eq:UOpDegreeBound}
    \operatorname{deg}_t Q_d(s,t) = \operatorname{deg}_t Q(s,t)\qquad \mbox{ and}\qquad \operatorname{deg}_t P_d(s,t) - \operatorname{deg}_t Q_d(s,t) \le \lfloor \delta/d \rfloor.
\end{equation}

With these observations, the algorithm is as follows.

Given a generic weight matrix $A$, fix $i,j$ such that $d_{ij} > 0$ and define the function
$\Phi(s,t) = \Phi_{ij}(s,t)$ as in Equation~\eqref{eq:AlgPhiDef}. Then do the following:
\begin{enumerate}
\item   For each factor $(1 - s^p t^q)$ in the denominator such that $q < 0$, multiply the numerator
        and denominator by the monomial $-s^{-p} t^{-q}$ so that all powers of $t$ in the denominator
        are nonnegative. Let $P(s,t)$ and $Q(s,t)$ denote the resulting numerator and denominator,
        respectively. Define $\delta = \operatorname{deg}_t P(s,t) - \operatorname{deg}_t Q(s,t)$.

\item   Define the function $Q_1(s,t)$ by replacing each of the factors of the form $(1 - s^p t^q)$
        in $Q(s,t)$ via the rule
        \[
            (1 - s^p t^q)   \longmapsto
                \big(1 - s^{p d_{ij}/\gcd(d_{ij},q)} t^{q/\gcd(d_{ij},q)} \big)^{\gcd(d_{ij},q)}.
        \]
        Then $Q_1(s,t)$ is the denominator of $U_{d_{ij},t}\big(P(s,t)/Q(s,t)\big)$.

\item   To compute the numerator $P_1(s,t)$ of $U_{d_{ij},t}\big(P(s,t)/Q(s,t)\big)$,
        first compute the Taylor series of $P(s,t)/Q(s,t)$ with respect to $t$ at $t = 0$ up to degree
        $d_{ij}\big( \lfloor \delta/d_{ij} \rfloor + \operatorname{deg}_t Q_1(s,t) \big)$.
        Apply $U_{d_{ij},t}$ to this Taylor series using the description in
        Equation~\eqref{eq:UOpPowerSeries}, multiply the output series by $Q_1(s,t)$,
        and delete all terms with $\operatorname{deg}_t$ larger than
        $\lfloor \delta/d_{ij} \rfloor + \operatorname{deg}_t Q_1(s,t)$.
        Call the result $P_1(s,t)$, and then
        $U_{d_{ij},t}\big(P(s,t)/Q(s,t)\big) = P_1(s,t)/Q_1(s,t)$.

\item   For each factor $(1 - s^p t^q)$ in the denominator of
        $P_1(s,t)/Q_1(s,t)$ such that $p < 0$, multiply the numerator
        and denominator by the monomial $-s^{-p} t^{-q}$ so that all powers of $s$ in the denominator
        are nonnegative. Let $P_2(s,t)$ and $Q_2(s,t)$ denote the resulting numerator
        and denominator, respectively. Define
        $\delta^\prime = \operatorname{deg}_s P_2(s,t) - \operatorname{deg}_s Q_2(s,t)$.

\item   Define the function $Q_3(s,t)$ by replacing each of the factors of the form $(1 - s^p t^q)$
        in $Q_2(s,t)$ via the rule
        \[
            (1 - s^p t^q)   \longmapsto
                \big(1 - s^{p/\gcd(d_{ij},p)} t^{q d_{ij}/\gcd(d_{ij},p)} \big)^{\gcd(d_{ij},p)}.
        \]
        Then $Q_3(s,t)$ is the denominator of
        $U_{d_{ij},s}\big(P_2(s,t)/Q_2(s,t)\big)$.

\item   To compute the numerator $P_3(s,t)$ of $U_{d_{ij},s}\big(P_2(s,t)/Q_2(s,t)\big)$,
        first compute the Taylor series of $P_2(s,t)/Q_2(s,t)$ with respect to $s$ at $s = 0$ up to degree
        $d_{ij}\big( \lfloor \delta^\prime/d_{ij} \rfloor + \operatorname{deg}_s Q_3(s,t) \big)$.
        Apply $U_{d_{ij},s}$ to the result using the description in
        Equation~\eqref{eq:UOpPowerSeries}, multiply the output by $Q_3(s,t)$,
        and delete all terms with $\operatorname{deg}_s$ larger than
        $\lfloor \delta^\prime/d_{ij} \rfloor + \operatorname{deg}_s Q_3(s,t)$.
        The result is $P_3(s,t)$, and
        $U_{d_{ij},s}\big(P_2(s,t)/Q_2(s,t)\big) = P_3(s,t)/Q_3(s,t)$.
\end{enumerate}
Apply the above process for each $i,j$ such that $d_{ij} > 0$, sum each of the resulting terms
$P_3(s,t)/Q_3(s,t)$, and substitute $s = t$ in the sum. The result is $\Hilb_A^{\on}(t)$.

This algorithm has been implemented on \emph{Mathematica} \cite{Mathematica} and is available
from the authors upon request. It does not perform particularly well. The largest bottleneck appears
to be the computation of Taylor series expansions; even for $2\times 4$ weight matrices with
single-digit entries, the algorithm can require series expansions up to degrees in the hundreds,
which are computationally very expensive. It can handle many $2\times 3$ and some
$2\times 4$ examples. However, it does not perform better than computing the off-shell invariants
using the package \emph{Normaliz} \cite{Normaliz} for \emph{Macaulay2} \cite{M2} and using the resulting
description to compute the Hilbert series, and this latter method has often been more successful.
As an example, in the case of weight matrix
\[
    A = \begin{pmatrix} 1 & 2 & 3 & 4 & 5 \\ 0 & 1 & 2 & 2 & 1 \end{pmatrix},
\]
the invariants and Hilbert series were computed using \emph{Normaliz} and \emph{Macaulay2}
in under four hours on a computer with one core and 5GB RAM, while the algorithm described here
ran out of memory on a machine with 16GB RAM. The Hilbert series in this case is given by
\begin{multline*}
    \frac{1}{(1-t^3)(1-t^4)(1-t^9)(1-t^{10})(1-t^{11})(1-t^{15})}
    \big(1 + 3t^2 + 3t^3 + 7t^4 + 11t^5 + 19t^6 + 31t^7 + 47t^8 + 68t^9
        \\  + 92t^{10} + 121t^{11} + 153t^{12} + 188t^{13} + 232t^{14} + 273t^{15} + 318t^{16} + 359t^{17}
            + 393t^{18} + 426t^{19}
        \\  + 454t^{20} + 475t^{21} + 491t^{22} + 496t^{23} + 491t^{24} + 475t^{25} + 454t^{26} + 426t^{27}
            + 393t^{28} + 359t^{29}
        \\  + 318t^{30} + 273t^{31} + 232t^{32} + 188t^{33} + 153t^{34} + 121t^{35} + 92t^{36} + 68t^{37}
            + 47t^{38} + 31t^{39}
        \\  + 19t^{40} + 11t^{41} + 7t^{42} + 3t^{43} + 3t^{44} + t^{46}\big).
\end{multline*}


\section{Computation of the Laurent coefficients}
\label{sec:Gammas}

Let $A \in\Z^{2\times n}$ be a faithful weight matrix in standard form with $n > 2$. As in Section~\ref{sec:HilbSer},
we let $d_{ij}$ denote the $2\times 2$ minor associated to columns $i$ and $j$. If $A$ is degenerate, we approximate
the $a_{ij}$ with real parameters $x_{ij}$ and let $c_{ij} = x_{1i} x_{2j} - x_{2i} x_{1j}$ to assume that
$c_{ij} + c_{ik} + c_{jk} \neq 0$ for each distinct $i,j,k$. Let $X = (x_{ij})$, let
\begin{equation}
\label{eq:HilbTerm}
    H_{X,i,j,\xi,\zeta}(t)  = \frac{1}{c_{ij}d_{ij}
            \prod\limits_{k\neq i,j}
                \big(1 - \xi^{d_{ik}} \zeta^{d_{jk}}
                    t^{(c_{ij} + c_{ik} + c_{jk})/c_{ij} } \big)
                \big(1 - \xi^{-d_{ik}} \zeta^{-d_{jk}}
                    t^{(c_{ij} - c_{ik} - c_{jk})/c_{ij} } \big)  },
\end{equation}
and assume throughout this section that the power functions are defined using a fixed branch of $\log t$
such that $\log 1 = 0$. Let
\begin{equation}
\label{eq:HilbSumTerms}
    H_X(t) = \sum_{\substack{i\neq j, \\ d_{ij} > 0}} \quad
            \sum\limits_{\substack{\xi^{d_{ij}}=1 \\ \zeta^{d_{ij}} = 1}} H_{X,i,j,\xi,\zeta}(t),
\end{equation}
so that a minor adaptation of Equation~\eqref{eq:HilbDegenerate} (by setting one instance of $d_{ij}$ equal to $c_{ij}$
in each term) can be expressed as
\begin{equation}
\label{eq:HilbSumTermsZ}
\Hilb_A^{\on}(t) = \lim_{X\to A} H_X(t).
\end{equation}

In this section, we consider the Laurent expansion
\begin{equation}
\label{eq:GamDef}
    \Hilb_A^{\on}(t)    =   \sum\limits_{m=0}^\infty \gamma_m(A) (1 - t)^{m - d},
\end{equation}
where $d =2(n-2)$ is the Krull dimension of the algebra $\R[V]^{\T^2}/I_J^{\T^2}$,
and compute explicit formulas for $\gamma_0$ and $\gamma_2$.
By the proof of \cite[Theorem~1.3]{HerbigHerdenSeaton}, the algebra $\R[V]^{\T^2}/I_J^{\T^2}$ is \emph{graded Gorenstein}, which in
particular implies that $\gamma_1 = 0$ and $\gamma_2 = \gamma_3$; see
\cite[Definition~1.1 and Corollary~1.8]{HerbigHerdenSeaton} or \cite[Theorem~1.1]{HerbigHerdenSeaton2}.

Our approach is to compute the Laurent coefficients of $H_X(t)$ for a choice of $X$ such that
$c_{ij} + c_{ik} + c_{jk} \neq 0$ for each distinct $i,j,k$. Hence, we will need the following result to
extend our computations to the limit as $X\to A$.

\begin{lemma}
\label{lem:GamConverge}
Let $f_{\mathbf{x}}(t)$ be a family of meromorphic function depending continuously on finitely many parameters
$\mathbf{x} = (x_1,\ldots,x_m)$. Let $t_0\in\C$, and assume that there are open neighborhoods $O$ of
$t_0$ in $\C$ and $U$ of $\mathbf{a} = (a_1,\ldots,a_m)$ in $\C^m$ such that for all $\mathbf{x}\in U$, the only pole of $f_{\mathbf{x}}(t)$
in $O$ is at $t = t_0$. Then for each $d \in \Z$, the degree $d$ Laurent coefficient of $f_{\mathbf{x}}(t)$ at $t = t_0$
converges to the degree $d$ Laurent coefficient of $f_{\mathbf{a}}(t)$ at $t = t_0$ as $\mathbf{x}\to\mathbf{a}$.
\end{lemma}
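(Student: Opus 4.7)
The plan is to express the Laurent coefficient as a contour integral via Cauchy's formula, and then invoke continuity of a parameter-dependent integral over a fixed compact contour that avoids all poles. This is the same technique used in Lemma~\ref{lem:RemovSing}, but applied to a circle around $t_0$ rather than to $C$ itself.

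First I would shrink $O$ if necessary to a small disk of radius $\rho > 0$ centered at $t_0$ whose closure is contained in the original $O$, and let $C_\rho$ be the positively oriented circle of radius $\rho$ around $t_0$. By hypothesis, for every $\mathbf{x}\in U$ the only pole of $f_{\mathbf{x}}(t)$ inside and on $C_\rho$ is at $t_0$, so the degree $d$ Laurent coefficient of $f_{\mathbf{x}}$ at $t_0$ is given by the standard formula
\[
    c_d(\mathbf{x}) = \frac{1}{2\pi\sqrt{-1}}\oint_{C_\rho} (t-t_0)^{d-1}\, f_{\mathbf{x}}(t)\, dt.
\]
Because $f_{\mathbf{x}}(t)$ depends continuously on $\mathbf{x}$ as a meromorphic family and $C_\rho$ lies entirely in the pole-free region for every $\mathbf{x}\in U$, the integrand $(t,\mathbf{x})\mapsto (t-t_0)^{d-1}f_{\mathbf{x}}(t)$ is jointly continuous on the compact set $C_\rho\times \overline{U'}$, where $U'\subset U$ is a sufficiently small closed neighborhood of $\mathbf{a}$.

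Next I would apply the dominated convergence theorem (or equivalently uniform continuity on the compact set $C_\rho\times\overline{U'}$) to conclude that
\[
    \lim_{\mathbf{x}\to\mathbf{a}} c_d(\mathbf{x})
    = \frac{1}{2\pi\sqrt{-1}}\oint_{C_\rho}(t-t_0)^{d-1}\,\Bigl(\lim_{\mathbf{x}\to\mathbf{a}} f_{\mathbf{x}}(t)\Bigr)\, dt
    = \frac{1}{2\pi\sqrt{-1}}\oint_{C_\rho}(t-t_0)^{d-1} f_{\mathbf{a}}(t)\, dt
    = c_d(\mathbf{a}),
\]
which is precisely the claim. A bound for dominated convergence is obtained by taking the supremum of $|f_{\mathbf{x}}(t)|$ over the compact set $C_\rho\times\overline{U'}$, which is finite by continuity and the fact that $C_\rho$ contains no poles of any $f_{\mathbf{x}}$ for $\mathbf{x}\in\overline{U'}$.

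The main potential obstacle is justifying that the continuous dependence of $f_{\mathbf{x}}$ on $\mathbf{x}$ yields \emph{joint} continuity on the compact contour, rather than mere pointwise continuity; however, since the hypothesis is that the family is meromorphic with a prescribed pole locus varying continuously, and the contour avoids all poles, this follows from the fact that on any compact pole-free set the function $f_{\mathbf{x}}(t)$ is given by a continuously varying quotient whose denominator stays bounded away from zero. No fundamentally new idea is needed beyond this standard continuity-under-the-integral argument.
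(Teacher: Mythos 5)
Your argument is essentially identical to the paper's proof: both represent the degree $d$ Laurent coefficient as a contour integral over a fixed compact contour avoiding all poles for $\mathbf{x}$ near $\mathbf{a}$, and then pass to the limit using boundedness of the continuous integrand on a compact set together with the dominated convergence theorem. The only slip is the exponent in Cauchy's formula, which should be $(t-t_0)^{-d-1}$ rather than $(t-t_0)^{d-1}$; this is a harmless indexing error that does not affect the argument.
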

\begin{proof}
Let $P$ be a simple closed positively-oriented curve in $O$ about $t_0$ and let $d\in\Z$. Then the degree $d$
Laurent coefficient of $f_{\mathbf{x}}(t)$ at $t_0$ is given by
\[
    \frac{1}{2\pi\sqrt{-1}} \int_P \frac{f_{\mathbf{x}}(t)\, dt}{(t - t_0)^{d+1}}.
\]
Let $D \subset U$ be the closure of a neighborhood of $\mathbf{a}$ in $\C^m$, and then as $P\times D$ is compact,
the continuous function $f_{\mathbf{x}}(t)$ is bounded on $P\times D$. Then by the Dominated Convergence Theorem,
we have
\[
    \lim\limits_{\mathbf{x}\to\mathbf{a}}
        \frac{1}{2\pi\sqrt{-1}} \int_P \frac{f_{\mathbf{x}}(t)\, dt}{(t - t_0)^{d+1}}
        =   \frac{1}{2\pi\sqrt{-1}} \int_P \frac{f_{\mathbf{a}}(t)\, dt}{(t - t_0)^{d+1}},
\]
completing the proof.
\end{proof}


\subsection{The first Laurent coefficient}
\label{subsec:Gam0}

Here, we consider the coefficient $\gamma_0(A)$ in the expansion \eqref{eq:GamDef} and prove the following.

\begin{theorem}
\label{thrm:Gam0}
Let $n > 2$ and let $A\in\Z^{2\times n}$ be a faithful weight matrix in standard form. The pole order of
$\Hilb_A^{\on}(t)$ at $t = 1$ is $2n - 4$, and the first nonzero Laurent coefficient $\gamma_0(A)$ of
$\Hilb_A^{\on}(t)$ is given by
\begin{equation}
\label{eq:Gam0}
    \gamma_0(A) =
    \lim\limits_{X\to A}
    \sum\limits_{\substack{i\neq j, \\ d_{ij} > 0}} \frac{ c_{ij}^{2n-5} }
        { \prod\limits_{k\neq i,j} (c_{ij} - c_{ik} - c_{jk})(c_{ij} + c_{ik} + c_{jk}) },
\end{equation}
where the $x_{ij}$ are real parameters approximating the $a_{ij}$, $X = (x_{ij})$, and
$c_{ij} = x_{1i} x_{2j} - x_{2i} x_{1j}$. In particular, for each $i,j,k$ such that $d_{ij} > 0$,
the singularities in Equation~\eqref{eq:Gam0} corresponding to $d_{ij} - d_{ik} - d_{jk} = 0$ and
$d_{ij} + d_{ik} + d_{jk} = 0$ are removable.

For the special case of a generic weight matrix $A$, we have the simplified formula
\begin{equation}
    \gamma_0(A) =
        \sum\limits_{\substack{i\neq j, \\ d_{ij} > 0}} \frac{ d_{ij}^{2n-5} }
        { \prod\limits_{k\neq i,j} (d_{ij} - d_{ik} - d_{jk})(d_{ij} + d_{ik} + d_{jk}) }.
\end{equation}
\end{theorem}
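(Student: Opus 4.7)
The plan is to analyze the pole at $t=1$ of each summand in the expression $\Hilb_A^{\on}(t) = \lim_{X\to A} H_X(t)$ from Corollary~\ref{cor:HilbDegenerate}, identify which summands contribute to the maximal-order pole, and collect their leading Laurent coefficients.

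First I would fix $X$ close enough to $A$ that $c_{ij}\pm c_{ik}\pm c_{jk}\ne 0$ for all distinct $i,j,k$. Then on the branch of $\log t$ with $\log 1 = 0$, a factor $(1-\xi^{\pm d_{ik}}\zeta^{\pm d_{jk}} t^{p})$ in the denominator of $H_{X,i,j,\xi,\zeta}(t)$ vanishes at $t=1$ exactly when $\xi^{d_{ik}}\zeta^{d_{jk}}=1$, in which case it expands as $p(1-t) + O((1-t)^2)$ with $p=(c_{ij}\pm c_{ik}\pm c_{jk})/c_{ij}\ne 0$. Since each $k\ne i,j$ contributes two such factors governed by the \emph{same} vanishing condition, the pole order of $H_{X,i,j,\xi,\zeta}(t)$ at $t=1$ equals
\[
    2\cdot\#\{k\ne i,j:\xi^{d_{ik}}\zeta^{d_{jk}}=1\}\le 2(n-2)=2n-4,
\]
with equality iff $\xi^{d_{ik}}\zeta^{d_{jk}}=1$ for \emph{every} $k\ne i,j$. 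By Proposition~\ref{prop:NumSolsCongruence}, for faithful $A$ the number of pairs $(\xi,\zeta)$ of $d_{ij}$th roots of unity meeting this constraint is exactly $d_{ij}$.

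Next I would compute the leading Laurent coefficient from any one such admissible pair. The crucial observation is that once $\xi^{d_{ik}}\zeta^{d_{jk}}=1$ holds for all $k\ne i,j$, the $\xi,\zeta$-dependence disappears from the contributing factors, so all $d_{ij}$ admissible pairs produce an identical leading term. A direct expansion yields
\[
    H_{X,i,j,\xi,\zeta}(t) = \frac{c_{ij}^{2(n-2)}}{c_{ij}\,d_{ij}\,\prod_{k\ne i,j}(c_{ij}+c_{ik}+c_{jk})(c_{ij}-c_{ik}-c_{jk})}\,(1-t)^{-(2n-4)} + O\bigl((1-t)^{-(2n-5)}\bigr),
\]
and summing the $d_{ij}$ admissible pairs cancels the factor $1/d_{ij}$ against the count $d_{ij}$, giving the total contribution of the ordered pair $(i,j)$ to the coefficient of $(1-t)^{-(2n-4)}$ in $H_X(t)$ as
\[
    \frac{c_{ij}^{2n-5}}{\prod_{k\ne i,j}(c_{ij}-c_{ik}-c_{jk})(c_{ij}+c_{ik}+c_{jk})}.
\]
All nonadmissible pairs $(\xi,\zeta)$ yield strictly lower pole order and hence do not influence $\gamma_0$.

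Summing over all $(i,j)$ with $d_{ij}>0$ gives $\gamma_0(H_X)$, and Lemma~\ref{lem:GamConverge} lets us exchange the limit $X\to A$ with the extraction of the coefficient of $(1-t)^{-(2n-4)}$, since the other poles of $H_X(t)$ remain bounded away from $t=1$ for $X$ near $A$; this produces formula~\eqref{eq:Gam0}. When $A$ is already generic, $X=A$ is permissible and we obtain the simplified formula directly. The pole order at $t=1$ is thus at most $2n-4$, and equals $2n-4$ because $\Hilb_A^{\on}(t)$ is the Hilbert series of the algebra $\R[V]^{\T^2}/I_J^{\T^2}$ of Krull dimension $2(n-2)$, so $\gamma_0(A)\ne 0$. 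The delicate point is the removability of the apparent $1/0$ singularities in the degenerate case: rather than cancelling them algebraically (which is treated separately in Section~\ref{subsec:Cancellations}), I would rely on the limit formulation together with Lemma~\ref{lem:GamConverge}, with Proposition~\ref{prop:NumSolsCongruence} ensuring that the identical leading coefficient appears exactly $d_{ij}$ times so that the final expression is independent of the perturbation direction.
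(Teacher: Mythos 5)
Your proposal is correct and follows essentially the same route as the paper: identify the summands $H_{X,i,j,\xi,\zeta}$ of maximal pole order $2n-4$ as those with $\xi^{d_{ik}}\zeta^{d_{jk}}=1$ for all $k\neq i,j$, count them as $d_{ij}$ via Proposition~\ref{prop:NumSolsCongruence} so that the $1/d_{ij}$ cancels, extract the leading coefficient from the expansion of $1/(1-t^y)$, and pass to the limit $X\to A$ with Lemma~\ref{lem:GamConverge}. The only cosmetic difference is that you make explicit the appeal to the Krull dimension to see that the pole order is exactly $2n-4$, which the paper leaves implicit.
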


Throughout this section, we fix $x_{ij}$ and corresponding $c_{ij}$ such that each $c_{ij} - c_{ik} - c_{jk} \neq 0$
and each $c_{ij} + c_{ik} + c_{jk} \neq 0$.
For each fixed $i\neq j$ such that $d_{ij} > 0$ and $d_{ij}$th roots of unity $\xi$ and $\zeta$, the pole order of
the term $H_{X,i,j,\xi,\zeta}(t)$ given by Equation~\eqref{eq:HilbTerm}
is equal to $2(n - s - 2)$ where $s = s(i,j,\xi,\zeta)$ is the number of $k\neq i,j$ such that
$\xi^{d_{ik}} \zeta^{d_{jk}} \neq 1$. The maximum pole order is $2n - 4$, which occurs for instance when
$\xi = \zeta = 1$. A term has a pole of order $2n - 4$,
and hence contributes to $\gamma_0$, if and only if $\xi^{d_{ik}} \zeta^{d_{jk}} = 1$ for each $k\neq i,j$.

Now, fix $i\neq j$ with $d_{ij} > 0$. By Proposition~\ref{prop:NumSolsCongruence}, the number of pairs $(\xi, \zeta)$
of $d_{ij}$th roots of unity such that $\xi^{d_{ik}} \zeta^{d_{jk}} = 1$ for all $k\neq i,j$ is equal to $d_{ij}$.
For each such $(\xi,\zeta)$, we have
\[
    H_{X,i,j,\xi,\zeta}(t)  = \frac{1}{c_{ij} d_{ij}
            \prod\limits_{k\neq i,j}
                \big(1 - t^{(c_{ij} + c_{ik} + c_{jk})/c_{ij} } \big)
                \big(1 - t^{(c_{ij} - c_{ik} - c_{jk})/c_{ij} } \big)  },
\]
implying that
\begin{equation}
\label{eq:Gam0Term}
    \sum\limits_{\substack{\xi^{d_{ij}}=\zeta^{d_{ij}} = 1\\ \forall k\ne i,j: \xi^{d_{ik}} \zeta^{d_{jk}} = 1}}
    H_{X,i,j,\xi,\zeta}(t)
    =
    \frac{1}{c_{ij} \prod\limits_{k\neq i,j}
                \big(1 - t^{(c_{ij} + c_{ik} + c_{jk})/c_{ij} } \big)
                \big(1 - t^{(c_{ij} - c_{ik} - c_{jk})/c_{ij} } \big)  }.
\end{equation}
Recalling that we define $t^y$ using a fixed branch of $\log t$ such that $\log 1 = 0$, we have the Laurent expansion
\begin{equation}
\label{eq:TaylorNoRoot1}
    \frac{1}{1 - t^y}
    =
    \frac{1}{y(1 - t)} + \frac{y - 1}{2y} + \frac{y^2 - 1}{12y}(1 - t) + \cdots.
\end{equation}
Hence, the degree $4 - 2n$ coefficient of the Laurent series of \eqref{eq:Gam0Term} at $t = 1$ is given by
\[
    \frac{ c_{ij}^{2n-5} }
        { \prod\limits_{k\neq i,j} (c_{ij} - c_{ik} - c_{jk})(c_{ij} + c_{ik} + c_{jk}) }.
\]
Summing over $i\neq j$ such that $d_{ij}> 0$, yields the following.

\begin{lemma}
\label{lem:Gam0NoLimit}
Assume $c_{ij} + c_{ik} + c_{jk} \neq 0$ for each distinct $i,j,k$. Then the degree $4 - 2n$ coefficient of the
Laurent series of the function $H_X(t)$ defined in Equation~\eqref{eq:HilbSumTerms} is given by
\begin{equation}
\label{eq:Gam0NoLimit}
    \sum\limits_{\substack{i\neq j, \\ d_{ij} > 0}} \frac{ c_{ij}^{2n-5} }
        { \prod\limits_{k\neq i,j} (c_{ij} - c_{ik} - c_{jk})(c_{ij} + c_{ik} + c_{jk}) }.
\end{equation}
\end{lemma}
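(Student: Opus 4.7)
The plan is to identify the terms in the double sum defining $H_X(t)$ that achieve the maximal pole order $2n-4$ at $t=1$, count them via Proposition~\ref{prop:NumSolsCongruence}, and then extract the leading Laurent coefficient by applying the expansion
\[
\frac{1}{1-t^y} = \frac{1}{y(1-t)} + \frac{y-1}{2y} + \cdots
\]
to each of the $2(n-2)$ factors in the denominator of the reduced expression.

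First I would fix $(i,j)$ with $d_{ij}>0$ and $d_{ij}$th roots of unity $\xi,\zeta$, and examine $H_{X,i,j,\xi,\zeta}(t)$. A factor $\big(1 - \xi^{d_{ik}}\zeta^{d_{jk}} t^{(c_{ij}\pm c_{ik}\pm c_{jk})/c_{ij}}\big)$ vanishes at $t=1$ if and only if $\xi^{d_{ik}}\zeta^{d_{jk}} = 1$, since the assumption $c_{ij}\pm c_{ik}\pm c_{jk}\ne 0$ prevents any exponent from being zero and the branch choice $\log 1 = 0$ makes $t^y$ continuous at $t=1$. Hence the pole order is exactly $2(n-s-2)$, where $s$ is the number of $k\ne i,j$ with $\xi^{d_{ik}}\zeta^{d_{jk}}\ne 1$, and the maximal pole order $2n-4$ is realized precisely by those $(\xi,\zeta)$ satisfying $\xi^{d_{ik}}\zeta^{d_{jk}} = 1$ for every $k\ne i,j$. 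All terms of strictly smaller pole order contribute $0$ to the degree $4-2n$ Laurent coefficient, so they may be discarded.

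The key counting step invokes Proposition~\ref{prop:NumSolsCongruence}: because $A$ is faithful, there are exactly $d_{ij}$ pairs $(\xi,\zeta)$ satisfying the system $\xi^{d_{ik}}\zeta^{d_{jk}} = 1$ for all $k\ne i,j$. For each such pair the relevant roots of unity in the summand all equal $1$, so each of the $d_{ij}$ summands equals the same rational function. Summing these produces the collapse of the factor $d_{ij}$ in the denominator, yielding Equation~\eqref{eq:Gam0Term}. This is the one genuinely nontrivial ingredient in the proof; everything else is essentially routine.

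Finally I would read off the degree $4-2n$ Laurent coefficient of Equation~\eqref{eq:Gam0Term} by expanding each of the $2(n-2)$ factors in the denominator via Equation~\eqref{eq:TaylorNoRoot1}. Each factor contributes a leading $\frac{c_{ij}}{(c_{ij}\pm c_{ik}\pm c_{jk})(1-t)}$, and taking the product together with the prefactor $1/c_{ij}$ gives the summand $c_{ij}^{2n-5}/\prod_{k\ne i,j}(c_{ij}-c_{ik}-c_{jk})(c_{ij}+c_{ik}+c_{jk})$. Summing over all ordered pairs $(i,j)$ with $d_{ij}>0$ produces Equation~\eqref{eq:Gam0NoLimit}, completing the argument.
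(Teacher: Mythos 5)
Your proposal is correct and follows essentially the same route as the paper: the same pole-order analysis $2(n-s-2)$, the same appeal to Proposition~\ref{prop:NumSolsCongruence} to count the $d_{ij}$ maximal-order terms and collapse the factor $d_{ij}$, and the same term-by-term application of the expansion \eqref{eq:TaylorNoRoot1} to extract the leading coefficient. No gaps.
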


Observe in Equations~\eqref{eq:HilbTerm} and \eqref{eq:HilbSumTerms} that there are finitely many values of
$\xi^{d_{ik}}\zeta^{d_{jk}}$, and hence that there is an open neighborhood $O$ of $t = 1$ in $\C$ such that when
the $x_{ij}$ are sufficiently close to the $a_{ij}$, the only pole of $H_X(t)$ in $O$ occurs at $t = 1$. Hence,
Theorem~\ref{thrm:Gam0} follows from Lemmas~\ref{lem:GamConverge} and \ref{lem:Gam0NoLimit} and
Equation~\eqref{eq:HilbSumTermsZ}.


\subsection{Cancellations in the first Laurent coefficient}
\label{subsec:Cancellations}

In the case of the one-dimensional torus considered in \cite{HerbigSeaton} and \cite{CowieHerbigSeatonHerden},
the first Laurent coefficient $\gamma_0$ is given by an expression similar to Equation~\eqref{eq:Gam0}.
In that case, the removability of the singularities was understood by interpreting this expression as
the quotient of a determinant, which was therefore divisible by the Vandermonde determinant in
the weights. The result is a description of the numerator after the cancellations as a Schur polynomial,
and hence a closed form expression for $\gamma_0$. This in particular leads to a quick proof that $\gamma_0$
is always positive.

In the case at hand, Theorem~\ref{thrm:Gam0} guarantees that the singularities in the expression
for $\gamma_0(A)$ in Equation~\eqref{eq:Gam0} are removable, just as in the one-dimensional case.
However, we have not obtained a similar combinatorial description of the expression for $\gamma_0(A)$
after the cancellations. In particular, we conjecture that $\gamma_0(A)$ is always positive for a faithful
weight matrix in standard form, and such a description would be useful to prove this claim. It could as
well lead to more efficient computation of the $\gamma_m(A)$ for specific degenerate $A$.

In this section, we describe an approach to understanding these cancellations
through brute force computations and end with a discussion of small values of $n$. See
Section~\ref{subsec:GamAlg} for a method to compute $\gamma_0(A)$ for specific examples of degenerate
weight matrices $A$ without having to perform the cancellations in general.

First, let us be more explicit about the cancellations in Equation~\eqref{eq:Gam0NoLimit}. We combine the sum in
Equation~\eqref{eq:Gam0NoLimit} into a single rational function of the form
\begin{equation}
\label{eq:Gam0Combined}
    \frac{\sum\limits_{\substack{i\neq j, \\ d_{ij} > 0}}  c_{ij}^{2n - 5}
        \prod\limits_{\substack{p,q,r \\ d_{pq}, d_{qr} > 0 \\ (p,q)\neq(i,j) \\ (q,r)\neq(i,j)}}
            (c_{pq} + c_{pr} + c_{qr})
        \prod\limits_{\substack{p,q,r \\ d_{pq}, d_{qr} > 0 \\ (p,r)\neq(i,j) \\ (q,r)\neq(i,j)}}
            (c_{qr} - c_{pq} - c_{pr})
        \prod\limits_{\substack{p,q,r \\ d_{pq}, d_{qr} > 0 \\ (p,q)\neq(i,j) \\ (p,r)\neq(i,j)}}
            (c_{pq} - c_{pr} - c_{qr})
            }
            {\prod\limits_{\substack{p,q,r \\ d_{pq}, d_{qr} > 0}}
                (c_{pq} + c_{pr} + c_{qr})(c_{qr} - c_{pq} - c_{pr})(c_{pq} - c_{pr} - c_{qr})}.
\end{equation}
Note that we continue to express the limits of the products and sums in terms of the $d_{ij}$ to emphasize
that the signs of the $c_{ij}$ and $d_{ij}$ coincide. Note further that if $c_{pq} > 0$ and $c_{qr} \geq 0$,
the hypothesis that $A$ is in standard form so that each $a_{1i} > 0$ (and hence each $x_{1i} > 0$) implies that
$c_{pr} \geq 0$ as well. That is, the factors of the form $(c_{pq} + c_{pr} + c_{qr})$ are always positive when
the $x_{ij}$ approximate a weight matrix in standard form, and only the other singularities are relevant.
Hence, the cancellations amount to the numerator of Equation~\eqref{eq:Gam0Combined} being
divisible by the factors of the form $(c_{qr} - c_{pq} - c_{pr})$ and $(c_{pq} - c_{pr} - c_{qr})$,
and the desired combinatorial description is an expression for the polynomial
\begin{equation}
\label{eq:Gam0CombinedNumerator}
    \frac{\sum\limits_{\substack{i\neq j, \\ d_{ij} > 0}}  c_{ij}^{2n - 5}
        \prod\limits_{\substack{p,q,r \\ d_{pq}, d_{qr} > 0 \\ (p,q)\neq(i,j) \\ (q,r)\neq(i,j)}}
            (c_{pq} + c_{pr} + c_{qr})
        \prod\limits_{\substack{p,q,r \\ d_{pq}, d_{qr} > 0 \\ (p,r)\neq(i,j) \\ (q,r)\neq(i,j)}}
            (c_{qr} - c_{pq} - c_{pr})
        \prod\limits_{\substack{p,q,r \\ d_{pq}, d_{qr} > 0 \\ (p,q)\neq(i,j) \\ (p,r)\neq(i,j)}}
            (c_{pq} - c_{pr} - c_{qr})
            }
            {\prod\limits_{\substack{p,q,r \\ d_{pq}, d_{qr} > 0}}
                (c_{qr} - c_{pq} - c_{pr})(c_{pq} - c_{pr} - c_{qr})}.
\end{equation}
The goal of this section is to give an alternate and more explicit demonstration that this is indeed a polynomial.

It is important to recall that the $c_{ij}$ are not independent variables; due to their
dependence on the $x_{ij}$ they satisfy the Pl\"{u}cker relations, see Equation~\eqref{eq:Pluecker}.
In general, the singularities in question are only removable if Equation~\eqref{eq:Gam0Combined} is interpreted as
a function in the $x_{ij}$ rather than treating the $c_{ij}$ as independent variables.

\begin{lemma}
\label{lem:Gam0RemovSing1}
As a polynomial in the $x_{ij}$, the numerator of Equation~\eqref{eq:Gam0Combined} is divisible by
the product of $(c_{pq} - c_{pr} - c_{qr})$ such that $d_{pq}, d_{qr} > 0$.
\end{lemma}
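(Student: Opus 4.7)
The plan is to establish divisibility of the numerator of \eqref{eq:Gam0Combined} by each individual irreducible factor $\phi_{PQR} := c_{PQ} - c_{PR} - c_{QR}$ associated to a valid triple $(P, Q, R)$ with $d_{PQ}, d_{QR} > 0$ separately, and then to observe that these factors are pairwise non-associate in the UFD $\R[x_{ij}]$. Non-associateness follows from two facts: if distinct valid triples have different underlying index sets, the corresponding factors involve disjoint variables; and if the index sets agree, the standard-form ordering $r_P < r_Q < r_R$ (with $r_i := x_{2i}/x_{1i}$) singles out a unique valid ordered triple, so the issue does not arise. Each $\phi_{PQR}$ is itself irreducible: using the identity $\phi_{PQR} = \det(\mathbf{x}_P + \mathbf{x}_R, \mathbf{x}_Q - \mathbf{x}_R)$, the invertible substitution $\mathbf{u} = \mathbf{x}_P + \mathbf{x}_R$, $\mathbf{v} = \mathbf{x}_Q - \mathbf{x}_R$ reduces $\phi_{PQR}$ to the generic $2 \times 2$ determinant $u_1 v_2 - u_2 v_1$.

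The core of the proof is to show that the rational function \eqref{eq:Gam0Combined}, which coincides with \eqref{eq:Gam0NoLimit} after combining over the common denominator, is regular along the hypersurface $\calH = \{\phi_{PQR} = 0\}$. Since $\phi_{PQR}$ appears with multiplicity exactly one in the denominator of \eqref{eq:Gam0Combined}---the only other ordered triple whose (C)-type factor is associated to $\phi_{PQR}$ is $(P, R, Q)$, which fails to be valid because $d_{QR} > 0$ forces $d_{RQ} < 0$, and a parallel check shows that no (A)- or (B)-factor of a valid triple is associated to $\phi_{PQR}$ in standard form---regularity on $\calH$ will force $\phi_{PQR}$ to divide the numerator via the valuation argument in the DVR obtained by localizing at the prime $(\phi_{PQR})$.

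To verify regularity, we would identify the terms in \eqref{eq:Gam0NoLimit} whose denominators vanish on $\calH$. These are exactly the $(i,j) = (P,Q)$ term, which contributes the factor $(c_{PQ} - c_{PR} - c_{QR}) = \phi_{PQR}$ at $k = R$, and the $(i,j) = (P,R)$ term, which contributes $(c_{PR} - c_{PQ} - c_{RQ}) = -\phi_{PQR}$ at $k = Q$; the $(Q,R)$ term's two factors at $k = P$ are $(c_{QR} + c_{PQ} + c_{PR})$ and $(c_{QR} - c_{PQ} - c_{PR})$, neither vanishing on $\calH$. Note that in standard form, $d_{PQ}, d_{QR} > 0$ automatically implies $d_{PR} > 0$, so $(P,R)$ is in the summation range. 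To compute the two residues, we would parameterize $\calH$ by writing $\mathbf{v} := \mathbf{x}_Q - \mathbf{x}_R$ and, on $\calH$, $\mathbf{x}_P = \lambda \mathbf{v} - \mathbf{x}_R$ for a scalar parameter $\lambda$. Setting $u := \det(\mathbf{v}, \mathbf{x}_R)$ and $v_k := \det(\mathbf{v}, \mathbf{x}_k)$ for $k \ne P, Q, R$, a direct calculation gives $c_{QR} = u$, $c_{PR} = \lambda u$, $c_{PQ} = (\lambda + 1) u$, $c_{Pk} = \lambda v_k - c_{Rk}$, and $c_{Qk} = v_k + c_{Rk}$. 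Substituting into the $(P,Q)$ denominator, the factors $c_{PQ} \pm c_{Pk} \pm c_{Qk}$ become $(\lambda + 1)(u \mp v_k)$ and $(c_{PQ} + c_{PR} + c_{QR})$ becomes $2(\lambda + 1) u$; after the powers of $(\lambda + 1)$ cancel against those in $c_{PQ}^{2n - 5} = (\lambda + 1)^{2n-5} u^{2n-5}$, the residue of the $(P,Q)$ term evaluates to $u^{2n-6}/\bigl(2 \prod_{k \ne P,Q,R}(u^2 - v_k^2)\bigr)$. The analogous substitution into the $(P,R)$ term, with $\lambda$ replacing $\lambda + 1$ and a sign flip from the $-\phi_{PQR}$ in its denominator, yields the negative of the same expression, so the residues cancel.

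The principal obstacle will be the careful bookkeeping of signs and exponents in the residue computation, especially verifying that the powers of $\lambda$ and $\lambda + 1$ in the numerators and denominators exactly cancel to reveal the matched expressions between the two terms. Once this is established, regularity of \eqref{eq:Gam0Combined} along $\calH$ follows, the DVR argument delivers divisibility by $\phi_{PQR}$, and iterating over all valid triples yields divisibility by the full product claimed in the lemma.
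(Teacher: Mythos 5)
Your proof is correct, and it reaches the same structural reduction as the paper's: divisibility by each irreducible factor $c_{PQ}-c_{PR}-c_{QR}$ separately, after observing that every summand of the numerator except the two indexed by $(P,Q)$ and $(P,R)$ carries that factor explicitly. Where you genuinely diverge is in how the cancellation between those two summands is established. The paper stays entirely inside polynomial algebra: it substitutes $c_{PQ}=c_{PR}+c_{QR}$ and then matches the two summands factor by factor through repeated applications of the Pl\"{u}cker relations. You instead rationally parameterize the hypersurface $\{c_{PQ}-c_{PR}-c_{QR}=0\}$ by $\mathbf{x}_P=\lambda(\mathbf{x}_Q-\mathbf{x}_R)-\mathbf{x}_R$, restrict both offending terms there, and find that each reduces to $u^{2n-6}/\bigl(2\prod_{k}(u^2-v_k^2)\bigr)$ up to the sign carried by $-\phi_{PQR}$ in the $(P,R)$ denominator; the valuation argument in the localization at the prime $(\phi_{PQR})$ then converts regularity along the hypersurface into divisibility of the numerator. (I checked the restrictions: the factors of the $(P,Q)$ term become $(\lambda+1)(u\mp v_k)$ and $2(\lambda+1)u$ against $c_{PQ}^{2n-5}=(\lambda+1)^{2n-5}u^{2n-5}$, those of the $(P,R)$ term become $\lambda(u\mp v_k)$ and $2\lambda u$ against $\lambda^{2n-5}u^{2n-5}$, so the powers of $\lambda+1$ and $\lambda$ cancel exactly as you predict.) Your route buys a transparent closed form for the common residue and absorbs the Pl\"{u}cker relations into the parameterization rather than invoking them three times as the paper does; your identity $\phi_{PQR}=\det(\mathbf{x}_P+\mathbf{x}_R,\mathbf{x}_Q-\mathbf{x}_R)$ also gives a cleaner irreducibility argument than the paper's. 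One small imprecision: two valid triples with different index sets need not have factors in \emph{disjoint} variables, since they may share two columns; the correct statement is that each factor involves variables from all three of its columns, so factors attached to different index sets have different supports and hence are non-associate. This does not affect the conclusion.
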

\begin{proof}
Let $F$ denote the numerator of Equation~\eqref{eq:Gam0Combined}, and let $F_{ij}$ denote the summand of $F$
corresponding to $i\neq j$ such that $d_{ij} > 0$. Note that for fixed $p,q,r$, the polynomial
$(c_{pq} - c_{pr} - c_{qr})$ (as a quadratic polynomial in the $x_{ij}$ has no linear factors is hence is reducible.
It is therefore sufficient to show that $F$ is divisible by each such factor individually.

Pick $I,J,K$ such that $d_{IJ}, d_{JK} > 0$. We will demonstrate that $F$
is contained in the ideal generated by the Pl\"{u}cker relations and $(c_{IJ} - c_{IK} - c_{JK})$. First note that
each summand of $F$ contains $(c_{IJ} - c_{IK} - c_{JK})$ explicitly as a factor except $F_{IJ}$ and $F_{IK}$
so that we may restrict our attention to $F_{IJ} + F_{IK}$. Both of these summands contain the factors
\[
    \prod\limits_{\substack{d_{pq}, d_{qr} > 0 \\ (p,q),(q,r)\neq \\ (I,J), (I,K)}}
        (c_{pq} + c_{pr} + c_{qr})
    \prod\limits_{\substack{d_{pq}, d_{qr} > 0 \\ (p,r),(q,r)\neq \\ (I,J), (I,K)}}
        (c_{qr} - c_{pq} - c_{pr})
    \prod\limits_{\substack{d_{pq}, d_{qr} > 0 \\ (p,q),(p,r)\neq \\ (I,J), (I,K)}}
        (c_{pq} - c_{pr} - c_{qr}),
\]
so that we may express $F_{IJ} + F_{IK}$ as a product of the above polynomial and the remaining factors,
where the latter can be expressed as
\begin{align*}
    &c_{IJ}^{2n - 5}
        (c_{JK} - c_{IJ} - c_{IK})
        \prod\limits_{\substack{r, \\ d_{Kr} > 0}}
            (c_{IK} + c_{Ir} + c_{Kr})
            (c_{IK} - c_{Ir} - c_{Kr})
    \\&\quad\quad
        \prod\limits_{\substack{p, \\  d_{pI} > 0}}
            (c_{pI} + c_{pK} + c_{IK})
            (c_{IK} - c_{pI} - c_{pK})
        \prod\limits_{\substack{q \neq J, \\ d_{Iq}, d_{qK} > 0}}
            (c_{qK} - c_{Iq} - c_{IK})
            (c_{Iq} - c_{IK} - c_{qK})
    \\&\quad
    + c_{IK}^{2n - 5}
        (c_{IJ} + c_{IK} + c_{JK})
        \prod\limits_{\substack{r \neq K \\ d_{Jr} > 0}}
            (c_{IJ} + c_{Ir} + c_{Jr})
            (c_{IJ} - c_{Ir} - c_{Jr})
    \\&\quad\quad
        \prod\limits_{\substack{p, \\ d_{pI} > 0}}
            (c_{pI} + c_{pJ} + c_{IJ})
            (c_{IJ} - c_{pI} - c_{pJ})
        \prod\limits_{\substack{q, \\ d_{Iq}, d_{qJ} > 0}}
            (c_{qJ} - c_{Iq} - c_{IJ})
            (c_{Iq} - c_{IJ} - c_{qJ}).
\end{align*}
In the first summand, it is helpful to think of the indices $p$, $q$, and $r$ as ranging over those columns of $A$
that, as vectors in $\R^2$, lie below $(a_{1I},a_{2I})$ (for $p$), between the $(a_{1I},a_{2I})$ and
$(a_{1K},a_{2K})$ (for $q$), and above $(a_{1J},a_{2J})$ (for $r$), and similarly for the second summand.
Applying the substitution $c_{IJ} = c_{IK} + c_{JK}$ in the factor $(c_{JK} - c_{IJ} - c_{IK})$ in the first
summand yields $- 2c_{IK}$, and applying $c_{IK} = c_{IJ} - c_{JK}$ in $(c_{IJ} + c_{IK} + c_{JK})$ in the second summand
yields $2c_{IJ}$. Noting that each index value not equal to $I,K,J$ appears exactly once as a $p$, $q$, or $r$ in each of
the above summands so that the total number of three-term factors in each summand is $2n - 6$, we express this as
\begin{align*}
    -2 c_{IJ} c_{IK}
        &\prod\limits_{r, \: d_{Kr} > 0}
            c_{IJ}(c_{IK} + c_{Ir} + c_{Kr})
            c_{IJ}(c_{IK} - c_{Ir} - c_{Kr})
    \\&\prod\limits_{p, \: d_{pI} > 0}
            c_{IJ}(c_{pI} + c_{pK} + c_{IK})
            c_{IJ}(c_{IK} - c_{pI} - c_{pK})
    \\&\prod\limits_{\substack{q \neq J, \\ d_{Iq}, d_{qK} > 0}}
            c_{IJ}(c_{qK} - c_{Iq} - c_{IK})
            c_{IJ}(c_{Iq} - c_{IK} - c_{qK})
    \\\quad
    + 2 c_{IJ} c_{IK}
        &\prod\limits_{\substack{r \neq K, \\ d_{Jr} > 0}}
            c_{IK}(c_{IJ} + c_{Ir} + c_{Jr})
            c_{IK}(c_{IJ} - c_{Ir} - c_{Jr})
    \\&\prod\limits_{p, \: d_{pI} > 0}
            c_{IK}(c_{pI} + c_{pJ} + c_{IJ})
            c_{IK}(c_{IJ} - c_{pI} - c_{pJ})
    \\&\prod\limits_{q, \: d_{Iq}, d_{qJ} > 0}
            c_{IK}(c_{qJ} - c_{Iq} - c_{IJ})
            c_{IK}(c_{Iq} - c_{IJ} - c_{qJ}).
\end{align*}
We will now rewrite the first summand to see that it is equal to the negative of the second summand.
Distributing a $c_{IJ}$ into each three-term factor, we apply the Pl\"{u}cker relations
\eqref{eq:Pluecker} $c_{IJ}c_{Kr} - c_{IK}c_{Jr} + c_{Ir} c_{JK} = 0$,
$c_{pI}c_{JK} - c_{pJ}c_{IK} + c_{pK} c_{IJ} = 0$, and $c_{Iq}c_{JK} - c_{IJ}c_{qK} + c_{IK} c_{qJ} = 0$,
so the first summand becomes
\begin{align*}
    -2 c_{IJ} c_{IK}
        &\prod\limits_{r, \: d_{Kr} > 0}
            (c_{IJ}c_{IK} + c_{IJ}c_{Ir} + c_{IK}c_{Jr} - c_{Ir} c_{JK})
            (c_{IJ}c_{IK} - c_{IJ}c_{Ir} - c_{IK}c_{Jr} + c_{Ir} c_{JK})
    \\&\prod\limits_{p, \: d_{pI} > 0}
            (c_{IJ}c_{pI} - c_{pI}c_{JK} + c_{pJ}c_{IK} + c_{IJ}c_{IK})
            (c_{IJ}c_{IK} - c_{IJ}c_{pI} + c_{pI}c_{JK} - c_{pJ}c_{IK})
    \\&\prod\limits_{\substack{q\neq J \\ d_{Iq}, d_{qK} > 0}}
            (c_{Iq}c_{JK} + c_{IK} c_{qJ} - c_{IJ}c_{Iq} - c_{IJ}c_{IK})
            (c_{IJ}c_{Iq} - c_{IJ}c_{IK} - c_{Iq}c_{JK} - c_{IK} c_{qJ}).
\end{align*}
Applying the relation $c_{IJ} - c_{IK} - c_{JK} = 0$, we rewrite this as
\begin{align*}
    -2 c_{IJ} c_{IK}
        &\prod\limits_{r, \: d_{Kr} > 0}
            (c_{IJ}c_{IK} + c_{IK}c_{Ir} + c_{IK}c_{Jr})
            (c_{IJ}c_{IK} - c_{IK}c_{Ir} - c_{IK}c_{Jr})
    \\&\prod\limits_{p, \: d_{pI} > 0}
            (c_{IK}c_{pI} + c_{pJ}c_{IK} + c_{IJ}c_{IK})
            (c_{IJ}c_{IK} - c_{IK}c_{pI} - c_{pJ}c_{IK})
    \\&\prod\limits_{\substack{q\neq J, \\ d_{Iq}, d_{qK} > 0}}
            (c_{IK} c_{qJ} - c_{IK}c_{Iq} - c_{IJ}c_{IK})
            (c_{IK}c_{Iq} - c_{IJ}c_{IK} - c_{IK} c_{qJ}).
\end{align*}
Recalling the assumption that $c_{IJ}, c_{JK} > 0$ and reorganizing factors, this is equal to the negative of the
second summand, completing the proof.
\end{proof}

An almost identical argument yields the following.

\begin{lemma}
\label{lem:Gam0RemovSing2}
As a polynomial in the $x_{ij}$, the numerator of Equation~\eqref{eq:Gam0Combined} is divisible by
the product of $(c_{qr} - c_{pq} - c_{pr})$ such that $d_{pq}, d_{qr} > 0$.
\end{lemma}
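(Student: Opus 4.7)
The plan is to follow the argument for Lemma~\ref{lem:Gam0RemovSing1} almost verbatim, with only the roles of the distinguished summands suitably permuted. Since $(c_{qr} - c_{pq} - c_{pr})$ is a quadratic polynomial in the $x_{ij}$ with no linear factors, it is irreducible, so it suffices to prove divisibility by each such factor individually. Fix $I,J,K$ with $d_{IJ}, d_{JK} > 0$; standard form forces the signs of the minors to track the slopes $a_{2i}/a_{1i}$, so $d_{IK} > 0$ as well. A direct bookkeeping check using the definitions of the three-term products in $F_{ij}$ shows that the factor $(c_{JK} - c_{IJ} - c_{IK})$ appears explicitly in every summand of $F$ except $F_{IK}$ and $F_{JK}$. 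It therefore suffices to verify that $F_{IK} + F_{JK}$ lies in the ideal generated by the Plücker relations and $(c_{JK} - c_{IJ} - c_{IK})$.

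Mirroring the previous proof, I would factor out from both $F_{IK}$ and $F_{JK}$ the common product of three-term factors present in each. The residue in each summand is $c_{ij}^{2n-5}$ times a product of ``asymmetric'' factors indexed by the remaining columns. The unique $(I,J,K)$-triple factor surviving in $F_{IK}$ is $(c_{IJ} + c_{IK} + c_{JK})$, while in $F_{JK}$ it is $(c_{IJ} - c_{IK} - c_{JK})$; after the substitution $c_{JK} = c_{IJ} + c_{IK}$, these collapse to $2c_{JK}$ and $-2c_{IK}$ respectively. Extracting a common $2c_{IK}c_{JK}$ leaves a difference (up to sign) of two products of $2n-6$ three-term factors, one block involving $c_{IK}$ and the other $c_{JK}$, and the goal becomes to show these two products coincide modulo Plücker.

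The comparison proceeds as in Lemma~\ref{lem:Gam0RemovSing1}: distribute $c_{JK}$ into each three-term factor of the first summand and $c_{IK}$ into those of the second, then rewrite using the three Plücker identities
\[
    c_{IJ}c_{Kr} - c_{IK}c_{Jr} + c_{Ir}c_{JK} = 0, \qquad
    c_{pI}c_{JK} - c_{pJ}c_{IK} + c_{pK}c_{IJ} = 0, \qquad
    c_{Iq}c_{JK} - c_{IJ}c_{qK} + c_{IK}c_{qJ} = 0,
\]
where $r,p,q$ range over the columns with slope above $K$, below $I$, and strictly between $I$ and $K$ (excluding $J$), respectively. A final application of $c_{JK} = c_{IJ} + c_{IK}$ reconciles the two expressions, delivering the desired cancellation after reorganizing the common factors.

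The main obstacle is the combinatorial matching: the slope-based partitions of $\{1,\ldots,n\} \setminus \{I,J,K\}$ that index the asymmetric factors of $F_{IK}$ and $F_{JK}$ are not literally the same (one is cut by $I$ and $K$, the other by $J$ and $K$), so establishing the factor-by-factor bijection requires careful tracking of which of $I,J,K$ plays the ``first'', ``middle'', and ``last'' role in each Plücker substitution. Once this bookkeeping is set up, however, the manipulations become a purely mechanical adaptation of those for Lemma~\ref{lem:Gam0RemovSing1}, with positivity of $c_{IK}$ and $c_{JK}$ (from the hypotheses together with standard form) ensuring no sign subtleties intrude at the final step.
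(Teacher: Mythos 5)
Your proposal coincides with the paper's intent: the paper's entire proof of this lemma is the one-line remark that an ``almost identical argument'' to Lemma~\ref{lem:Gam0RemovSing1} applies, and you have correctly worked out the necessary changes --- the two exceptional summands are now $F_{IK}$ and $F_{JK}$, their surviving $(I,J,K)$-triple factors are $(c_{IJ}+c_{IK}+c_{JK})$ and $(c_{IJ}-c_{IK}-c_{JK})$, collapsing to $2c_{JK}$ and $-2c_{IK}$ under $c_{JK}=c_{IJ}+c_{IK}$, and the Pl\"{u}cker substitutions must be re-indexed against the mismatched slope partitions (cut by $J,K$ for $F_{IK}$ and by $I,K$ for $F_{JK}$). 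One small transposition to fix: since the prefactors are $c_{IK}^{2n-5}$ and $c_{JK}^{2n-5}$, it is $c_{IK}$ that gets distributed into the $2n-6$ three-term factors of $F_{IK}$ and $c_{JK}$ into those of $F_{JK}$ (each product $c_{IK}\cdot(\text{factor of }F_{IK})$ then converts, modulo the Pl\"{u}cker relations and $c_{JK}=c_{IJ}+c_{IK}$, to $c_{JK}$ times the corresponding factor of $F_{JK}$), not the other way around as written.
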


Combining Lemmas \ref{lem:Gam0RemovSing1} and \ref{lem:Gam0RemovSing2}, it follows that
Equation~\eqref{eq:Gam0Combined}, as a rational function in the $x_{ij}$, can be expressed in the form
\begin{equation}
\label{eq:Gam0SAfterCancel}
    \frac{S}{\prod\limits_{\substack{p,q,r \\ d_{pq}, d_{qr} > 0}}(c_{pq} + c_{pr} + c_{qr})},
\end{equation}
where $S$ is a polynomial in the $x_{ij}$ that is equal to the expression in Equation~\eqref{eq:Gam0CombinedNumerator}
on its domain.

When $n=3$, the cancellations can be dealt with by hand; in this case, they occur even if the $c_{ij}$ for
$i < j$ are treated as independent variables (i.e., without applying the Pl\"{u}cker relations), and the resulting
numerator $S = 1$ is constant. When $n = 4$,
$S$ has $14$ terms in the $c_{ij}$; when $n = 5$, the cancellations involved a Gr\"{o}bner basis computation that took
five days on a PC and yielded an $S$ with $1961$ terms in the $c_{ij}$. Of course, when $n > 3$, the number of terms is
not unique due to the Pl\"{u}cker relations.


\subsection{The next three Laurent coefficients}
\label{subsec:Gam123}

We now turn to the computation of the next Laurent coefficients and prove the following.

\begin{theorem}
\label{thrm:Gam2}
Let $n > 2$ and let $A\in\Z^{2\times n}$ be a faithful weight matrix in standard form. Then $\gamma_1(A) = 0$ and
\begin{equation}
\label{eq:Gam2}
    \gamma_2(A) = \gamma_3(A) =
    \lim\limits_{X\to A}
    \sum\limits_{\substack{i\neq j, \\ d_{ij} > 0}}
        \frac{- c_{ij}^{2n - 7} \sum\limits_{p\neq i,j} (c_{ip} + c_{jp})^2}
        {12\prod\limits_{k\neq i,j} (c_{ij} - c_{ik} - c_{jk})(c_{ij} + c_{ik} + c_{jk})}
    + \sum\limits_{p=1}^n \frac{g_p - 1}{12} \gamma_0(A_p),
\end{equation}
where the $x_{ij}$ are real parameters approximating the $a_{ij}$, $X = (x_{ij})$,
$c_{ij} = x_{1i} x_{2j} - x_{2i} x_{1j}$, $A_p$ is the weight matrix formed by removing column $p$
from $A$, and $g_p$ is the $\gcd$ of the $2\times 2$ minors of $A_p$.
In particular, for each $i,j,k$ such that $d_{ij} > 0$, the singularities in Equation~\eqref{eq:Gam2}
corresponding to $d_{ij} - d_{ik} - d_{jk} = 0$ and $d_{ij} + d_{ik} + d_{jk} = 0$ are removable.

For the special case of a generic weight matrix $A$, we have the simplified formula
\begin{equation}
    \gamma_2(A) = \gamma_3(A) =
    \sum\limits_{\substack{i\neq j, \\ d_{ij} > 0}}
        \frac{- d_{ij}^{2n - 7} \sum\limits_{p\neq i,j} (d_{ip} + d_{jp})^2}
        {12\prod\limits_{k\neq i,j} (d_{ij} - d_{ik} - d_{jk})(d_{ij} + d_{ik} + d_{jk})}
    + \sum\limits_{p=1}^n \frac{g_p - 1}{12} \gamma_0(A_p).
\end{equation}
\end{theorem}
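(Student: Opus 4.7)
The plan is to compute $\gamma_2(A)$ directly as a Laurent coefficient of $H_X(t)$ (Equation~\eqref{eq:HilbSumTerms}) for a generic choice of parameters $X$, and then pass to the limit via Lemma~\ref{lem:GamConverge}. The equalities $\gamma_1(A) = 0$ and $\gamma_2(A) = \gamma_3(A)$ follow from the graded Gorenstein property of $\R[V]^{\T^2}/I_J^{\T^2}$ cited in the paragraph following Equation~\eqref{eq:GamDef}, so only $\gamma_2(A)$ requires genuine computation.

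Fix generic $X$ so that $c_{ij} \pm c_{ik} \pm c_{jk} \neq 0$ for distinct $i, j, k$, and decompose Equation~\eqref{eq:HilbSumTerms} according to the integer $s(i,j,\xi,\zeta) := \#\{k \neq i, j : \xi^{d_{ik}}\zeta^{d_{jk}} \neq 1\}$. Each summand $H_{X,i,j,\xi,\zeta}(t)$ has a pole of order $2(n - 2 - s)$ at $t = 1$, so only $s = 0$ and $s = 1$ terms can contribute to the Laurent coefficient of $(1-t)^{6 - 2n}$.

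For the $s = 0$ part, I would apply Proposition~\ref{prop:NumSolsCongruence} to collapse the sum over the $d_{ij}$ compatible pairs into the form of Equation~\eqref{eq:Gam0Term}, then expand each factor $(1-t^{y_k^{\pm}})^{-1}$ via Equation~\eqref{eq:TaylorNoRoot1}. Because $y_k^+ + y_k^- = 2$, the coefficient of $(1-t)^1$ in each pair-factor $(1-t^{y_k^+})^{-1}(1-t^{y_k^-})^{-1}$ vanishes, which forces the $(1-t)^1$ coefficient of the full product to vanish (independently recovering $\gamma_1 = 0$) and simplifies the $(1-t)^2$ coefficient to $\sum_{k \neq i,j}(y_k^+ y_k^- - 1)/12 = -\frac{1}{12 c_{ij}^2}\sum_{k \neq i,j}(c_{ik} + c_{jk})^2$. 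Multiplying by the leading coefficient $1/(c_{ij}\prod y_k^+ y_k^-)$ and summing over $(i,j)$ produces the first sum in Equation~\eqref{eq:Gam2}.

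For the $s = 1$ part, fix an exceptional index $p$ and consider the subgroup $S_p \leq \mu_{d_{ij}} \times \mu_{d_{ij}}$ of pairs satisfying $\xi^{d_{ik}}\zeta^{d_{jk}} = 1$ for $k \neq i, j, p$. Proposition~\ref{prop:NumSolsCongruence} applied to $A_p$ gives $|S_p| = g_p d_{ij}$, and the same proposition applied to $A$ gives kernel of size $d_{ij}$ for the homomorphism $\phi \colon S_p \to \mu_{d_{ij}}$, $\phi(\xi, \zeta) := \xi^{d_{ip}}\zeta^{d_{jp}}$. Since $g_p \mid d_{ij}$, the image $\phi(S_p)$ is the unique subgroup $\mu_{g_p} \subset \mu_{d_{ij}}$, and the $(g_p - 1) d_{ij}$ pairs contributing to $s = 1$ are equidistributed with multiplicity $d_{ij}$ over $\mu_{g_p} \setminus \{1\}$. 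The leading Laurent contribution from these pairs reduces via the classical identity $\sum_{\omega \in \mu_m \setminus \{1\}} \big((1-\omega)(1-\omega^{-1})\big)^{-1} = (m^2 - 1)/12$ (with $m = g_p$), and the remaining factor matches the formula of Theorem~\ref{thrm:Gam0} applied to $A_p$, producing the summand $\tfrac{g_p - 1}{12}\gamma_0(A_p)$ after appropriate bookkeeping for the non-faithful case.

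Finally, the removability of the singularities where $d_{ij} \pm d_{ik} \pm d_{jk} = 0$ follows from Lemma~\ref{lem:GamConverge} applied to both parts, in direct analogy with Section~\ref{subsec:Gam0}. The main technical obstacle will be the $s = 1$ bookkeeping: tracking the structure of $\phi(S_p) = \mu_{g_p}$, the multiplicity of each $\omega$-value, and the precise identification of the resulting expression with $\gamma_0(A_p)$ via Theorem~\ref{thrm:Gam0}, especially when $A_p$ is not faithful.
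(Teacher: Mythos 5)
Your proposal is correct and follows essentially the same route as the paper: decompose $H_X(t)$ by $s(i,j,\xi,\zeta)$, extract the degree $6-2n$ Laurent coefficient of the $s=0$ terms via the expansion \eqref{eq:TaylorNoRoot1} (your pairing of the $\pm$ factors using $y_k^+ + y_k^- = 2$ is just a tidier bookkeeping of the paper's three Cauchy-product cases, which sum to $\tfrac16-\tfrac14=-\tfrac1{12}$), handle the $s=1$ terms through exactly the same group-homomorphism count and root-of-unity identity, and conclude with Lemma~\ref{lem:GamConverge}. The remaining tension you flag between the $\tfrac{g_p^2-1}{12}$ produced by the identity and the $\tfrac{g_p-1}{12}\gamma_0(A_p)$ in the statement is present in the paper's own proof as well, so it is not a defect of your plan relative to theirs.
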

\begin{proof}
We first compute $\gamma_1(A)$. Note that the fact that $\gamma_1(A) = 0$ follows from the results
of \cite{HerbigHerdenSeaton} as noted after Equation~\eqref{eq:GamDef} above. We verify this explicitly here on
the way towards the computation of $\gamma_2(A)$.

Based on the observations after the statement of Theorem~\ref{thrm:Gam0} and continuing to use the same
notation, we need to consider terms $H_{X,i,j,\xi,\zeta}(t)$ of pole order $2(n - s - 2)$ where $s = 0$ or $1$;
recall that $s = s(i,j,\xi,\zeta)$ denotes the number of $k$ such that $\xi^{d_{ik}} \zeta^{d_{jk}} \neq 1$. In
particular, there are no terms with pole order $2n - 3$, so only terms where $s = 0$ contribute to $\gamma_1(A)$.

Using Equation~\eqref{eq:TaylorNoRoot1} and the Cauchy product formula, the degree
$5 - 2n$ coefficient of the Laurent expansion of the term $H_{X,i,j,\xi,\zeta}(t)$ is computed by choosing
a $p\neq i,j$, multiplying the degree $0$ coefficient of the expansion of
$1/(1 - t^{(c_{ij} + c_{ip} + c_{jp})/c_{ij} })$ or
$1/(1 - t^{(c_{ij} - c_{ip} - c_{jp})/c_{ij} })$ with the degree $-1$ coefficient of the other factors,
and summing over all choices of $p$ and the factor from the pair. That is, the degree $5 - 2n$ coefficient
of a term $H_{X,i,j,\xi,\zeta}(t)$ such that $s = 0$ is given by
\begin{align*}
    \sum\limits_{p\neq i,j}
        \frac{ c_{ij}^{2n - 6}}
        {c_{ij}^2\prod\limits_{k\neq i,j,p} (c_{ij} - c_{ik} - c_{jk})(c_{ij} + c_{ik} + c_{jk})}
        \Bigg( \left(\frac{c_{ij}}{c_{ij} + c_{ip} + c_{jp}}\right)
        \frac{(c_{ij} - c_{ip} - c_{jp})/c_{ij} - 1}{2(c_{ij} - c_{ip} - c_{jp})/c_{ij}}
    \\+ \left(\frac{c_{ij}}{c_{ij} - c_{ip} - c_{jp}}\right)
        \frac{(c_{ij} + c_{ip} + c_{jp})/c_{ij} - 1}{2(c_{ij} + c_{ip} + c_{jp})/c_{ij}}
        \Bigg)
    =   0,
\end{align*}
confirming that $\gamma_1(A) = 0$ (which also follows from the results of
\cite{HerbigHerdenSeaton,HerbigHerdenSeaton2} as described above).

For the degree $6 - 2n$ coefficient, we first consider the contribution of terms $H_{X,i,j,\xi,\zeta}(t)$ with
$s = 0$. The contribution is computed similarly to above, except that
we consider the products of the degree $1$ coefficient of a factor corresponding to $p\neq i,j$ with the degree $-1$
coefficients of the other factors, and the degree $0$ coefficient of two factors corresponding to $p,q\neq i,j$
with the degree $-1$ coefficients of the other factors.

In the first case, we have
\begin{align*}
    \sum\limits_{p\neq i,j}
        \frac{ c_{ij}^{2n - 6}}
        {c_{ij}^2\prod\limits_{k\neq i,j,p} (c_{ij} - c_{ik} - c_{jk})(c_{ij} + c_{ik} + c_{jk})}
        \Bigg( \left(\frac{c_{ij}}{c_{ij} + c_{ip} + c_{jp}}\right)
        \frac{(c_{ij} - c_{ip} - c_{jp})^2/c_{ij}^2 - 1}{12(c_{ij} - c_{ip} - c_{jp})/c_{ij}}
    \\+ \left(\frac{c_{ij}}{c_{ij} - c_{ip} - c_{jp}}\right)
        \frac{(c_{ij} + c_{ip} + c_{jp})^2/c_{ij}^2 - 1}{12(c_{ij} + c_{ip} + c_{jp})/c_{ij}}
        \Bigg)
    \\= \frac{ c_{ij}^{2n - 8} \sum\limits_{p\neq i,j} (c_{ip} + c_{jp})^2}
        {6 \prod\limits_{k\neq i,j} (c_{ij} - c_{ik} - c_{jk})(c_{ij} + c_{ik} + c_{jk})}.
\end{align*}
Summing over all terms $H_{X,i,j,\xi,\zeta}(t)$ and recalling that for each $i\neq j$ with $d_{ij} > 0$, there
are by Proposition~\ref{prop:NumSolsCongruence} $d_{ij}$ pairs $(\xi,\zeta)$ such that $s = 0$, we have
\begin{equation}
\label{eq:Gam2r0OneExcep}
    \sum\limits_{\substack{i\neq j,\\d_{ij} > 0}}
        \frac{ c_{ij}^{2n - 7} \sum\limits_{p\neq i,j} (c_{ip} + c_{jp})^2}
        {6\prod\limits_{k\neq i,j} (c_{ij} - c_{ik} - c_{jk})(c_{ij} + c_{ik} + c_{jk})}.
\end{equation}

In the second case, we first consider the situation where both factors
$1/(1 - t^{(c_{ij} + c_{ip} + c_{jp})/c_{ij} })$ and $1/(1 - t^{(c_{ij} - c_{ip} - c_{jp})/c_{ij} })$
contribute a degree $0$ coefficient for some $p\ne i,j$ while the remaining factors corresponding to
$k\neq i,j,p$ contribute their degree $-1$ coefficient. Summing over all relevant terms,
a calculation very similar to those above yields the contribution
\begin{equation}
\label{eq:Gam2r0TwoExcep}
    \sum\limits_{\substack{i\neq j,\\d_{ij} > 0}}
        \frac{- c_{ij}^{2n - 7} \sum\limits_{p\neq i,j} (c_{ip} + c_{jp})^2}
        {4\prod\limits_{k\neq i,j} (c_{ij} - c_{ik} - c_{jk})(c_{ij} + c_{ik} + c_{jk})}.
\end{equation}
In addition, we need to consider the situation where distinct $p,q\neq i,j$ with $p<q$
contribute each the degree $0$ coefficient of one of their corresponding factors
$1/(1 - t^{(c_{ij} + c_{ir} + c_{jr})/c_{ij} })$ and $1/(1 - t^{(c_{ij} - c_{ir} - c_{jr})/c_{ij} })$
where $r = p$ or $q$. An easy calculation shows that in this case terms cancel, and the total contribution is zero.

We now consider the contribution of $H_{X,i,j, \xi,\zeta}(t)$ where $s = 1$, i.e., $\xi^{d_{ik}} \zeta^{d_{jk}} = 1$
for all $k\neq i,j$ except one, say $k = p$. Such a term corresponds to a choice of $i,j$ and a solution $(\xi,\zeta)$ to
$\xi^{d_{ik}} \zeta^{d_{jk}} = 1$, $k\neq i,j$ for the weight matrix $A_p\in\Z^{2\times(n-1)}$ formed by
removing the $p$th column. If $A_p$ is faithful, then the number of pairs $(\xi,\zeta)$ such
that $\xi^{d_{ik}} \zeta^{d_{jk}} = 1$ for all $k\neq i,j,p$ is $d_{ij}$ by Proposition~\ref{prop:NumSolsCongruence},
hence each such pair satisfies $\xi^{d_{ip}} \zeta^{d_{jp}} = 1$ by counting. It follows that there are no
$H_{X,i,j,\xi,\zeta}(t)$ with $s = 1$ corresponding to $A_p$. Similarly, if $A_p$ has rank $1$, then there are no
$i,j\neq p$ such that $d_{ij} > 0$ and hence no corresponding terms $H_{X,i,j,\xi,\zeta}(t)$ in $H_X(t)$.

If $A_p$ is not faithful and the $2\times 2$ minors of $A_p$ have $\gcd$ $g_p > 1$, then there are
$g_p d_{ij}$ pairs $(\xi,\zeta)$ to consider such that $\xi^{d_{ik}} \zeta^{d_{jk}} = 1$ for all $k\neq i,j,p$,
again by Proposition~\ref{prop:NumSolsCongruence}.
Identifying the set of pairs of $d_{ij}$th roots of unity with $(\Z/d_{ij}\Z)^2$,
the set of $(\xi,\zeta)$ such that $\xi^{d_{ik}} \zeta^{d_{jk}} = 1$ for $k\neq i,j,p$ forms a subgroup
of order $g_pd_{ij}$. As $\xi^{d_{ip}} \zeta^{d_{jp}} = 1$ for $d_{ij}$ of these pairs,
$(\xi,\zeta)\mapsto \xi^{d_{ip}} \zeta^{d_{jp}}$ is a homomorphism to $\Z/d_{ij}\Z$
with kernel of order $d_{ij}$. Therefore, the image of this homomorphism corresponds to a cyclic subgroup
of $\Z/d_{ij}\Z$ of size $g_p$, which means that the homomorphism $(\xi,\zeta)\mapsto \xi^{d_{ip}} \zeta^{d_{jp}}$
maps onto the group of $g_p$th roots of unity, and
for each $g_p$th root of unity $\eta$, there are $d_{ij}$ pairs $(\xi,\zeta)$ such that
$\xi^{d_{ip}} \zeta^{d_{jp}} = \eta$.

Fixing $i,j,p$ and $(\xi,\zeta)$ such that $\xi^{d_{ip}} \zeta^{d_{jp}} =\eta \neq 1$,
$H_{X,i,j,\xi,\zeta}(t)$ is of the form
\begin{align*}
    H_{X,i,j,\xi,\zeta}(t)  = \Big( c_{ij}^2
            \big(1 - \eta t^{(c_{ij} + c_{ip} + c_{jp})/c_{ij} } \big)
                \big(1 - \eta^{-1} t^{(c_{ij} - c_{ip} - c_{jp})/c_{ij} } \big)
            \\ \prod\limits_{k\neq i,j,p}
                \big(1 - t^{(c_{ij} + c_{ik} + c_{jk})/c_{ij} } \big)
                \big(1 - t^{(c_{ij} - c_{ik} - c_{jk})/c_{ij} } \big)  \Big)^{-1}
\end{align*}
and has a pole order of $2n - 6$. Using the expansion \eqref{eq:TaylorNoRoot1} as well as
\[
    \frac{1}{1 - \nu t^y}
    =
    \frac{1}{1 - \nu} - \frac{\nu y}{(\nu - 1)^2}(1 - t) + \cdots,
\]
the coefficient of degree $2n - 6$ of $H_{X,i,j,\xi,\zeta}(t)$ is given by
\begin{align*}
    \frac{c_{ij}^{2n-6}}{c_{ij}^2(1 - \eta)(1 - \eta^{-1})
        \prod\limits_{k\neq i,j,p} (c_{ij} + c_{ik} + c_{jk})(c_{ij} - c_{ik} - c_{jk}) }
    \\=
    \frac{-c_{ij}^{2n-8} }
        {\prod\limits_{k\neq i,j,p} (c_{ij} + c_{ik} + c_{jk})(c_{ij} - c_{ik} - c_{jk}) }
    \left(\frac{ \eta }{ (1 - \eta)^2 } \right).
\end{align*}
Summing over all $(\xi,\zeta)$ corresponding to the fixed $i,j,p$ such that $\xi^{d_{ip}} \zeta^{d_{jp}} \neq 1$,
we have
\begin{align*}
    \sum\limits_{\substack{\xi^{d_{ij}} = \zeta^{d_{ij}} = 1 \\ \xi^{d_{ip}} \zeta^{d_{jp}} = \eta \neq 1}}
    &\frac{-c_{ij}^{2n-8} }
        {\prod\limits_{k\neq i,j,p} (c_{ij} + c_{ik} + c_{jk})(c_{ij} - c_{ik} - c_{jk}) }
    \left(\frac{ \eta }{ (1 - \eta)^2 } \right)
    \\&=
    \frac{-c_{ij}^{2n-7} }
        {\prod\limits_{k\neq i,j,p} (c_{ij} + c_{ik} + c_{jk})(c_{ij} - c_{ik} - c_{jk}) }
    \sum\limits_{\eta^{g_p} = 1, \eta\neq 1}
        \left(\frac{ \eta }{ (1 - \eta)^2 } \right)
    \\&=
    \frac{c_{ij}^{2n-7} (g_p^2 - 1)}
        {12\prod\limits_{k\neq i,j,p} (c_{ij} + c_{ik} + c_{jk})(c_{ij} - c_{ik} - c_{jk}) },
\end{align*}
where the sum over $\eta$ is computed using \cite[Equation (3.11)]{Gessel}.
Summing over each $p$ and $i\neq j$ with $d_{ij} > 0$, we obtain
\[
    \sum\limits_{p=1}^n \frac{g_p^2 - 1}{12} \sum\limits_{\substack{i\neq j, \\ d_{ij} > 0}}
    \frac{c_{ij}^{2n-7}} {\prod\limits_{k\neq i,j,p} (c_{ij} + c_{ik} + c_{jk})(c_{ij} - c_{ik} - c_{jk}) }
    =   \sum\limits_{p=1}^n \frac{g_p^2 - 1}{12}\gamma_0(A_p).
\]
Combining this with Equations~\eqref{eq:Gam2r0OneExcep} and \eqref{eq:Gam2r0TwoExcep}, and applying
Lemma~\ref{lem:GamConverge} identically as in the proof of Theorem~\ref{thrm:Gam0},
completes computation of $\gamma_2(A)$. That $\gamma_2(A) = \gamma_3(A)$ follows from
\cite[Theorem~1.3 and Corollary~1.8]{HerbigHerdenSeaton}.
\end{proof}

\begin{remark}
\label{rem:Gam2Cancel}
As discussed in Section~\ref{subsec:Cancellations} for $\gamma_0(A)$, a combinatorial description of the
expression for $\gamma_2(A)$ in Theorem~\ref{thrm:Gam2} after the cancellations is desirable. The
cancellations in the second sum involving $\gamma_0(A_p)$ are as described in Lemmas~\ref{lem:Gam0RemovSing1}
and \ref{lem:Gam0RemovSing2}, and we have verified that the cancellations for $n = 3, 4$ occur
analogously in the first sum.
\end{remark}

\begin{remark}
\label{rem:OffShellGam0}
Using Equation~\eqref{eq:OnOffHilb}, an immediate consequence of Theorems~\ref{thrm:Gam0} and \ref{thrm:Gam2}
is an explicit formula for the first four Laurent coefficients $\gamma_m^{\off}(A)$ of the off-shell invariants;
see Remark~\ref{rem:OffShellHilb}. Specifically, the pole order of $\Hilb_A^{\off}(t)$ at $t = 1$ is
$2n - 2$, and we have the following
\begin{align*}
    \gamma_0^{\off}(A)  =   \gamma_1^{\off}(A)
                        &=  \frac{\gamma_0(A)}{4},
    \\
    \gamma_2^{\off}(A)  &=  \frac{3\gamma_0(A) + 4\gamma_2(A)}{16},
    \\
    \gamma_3^{\off}(A)  &=  \frac{\gamma_0(A) + 4\gamma_2(A)}{8}.
\end{align*}
\end{remark}


\subsection{Computing the Laurent coefficients}
\label{subsec:GamAlg}

In the case of a generic weight matrix $A$, Theorems~\ref{thrm:Gam0} and \ref{thrm:Gam2} can be used to compute
$\gamma_0(A)$ and $\gamma_2(A)$ with little difficulty. However, if $A$ has degeneracies, then as was noted in
Section~\ref{subsec:Cancellations}, an expression for $\gamma_0(A)$ with the singularities removed can be very expensive
to compute, even for representations as small as $n = 5$. Here, we briefly describe a method that has been
successful to more efficiently  compute $\gamma_0(A)$ for degenerate $A$ with values of $n$ as large as $10$.

Given a weight matrix $A\in\Z^{2\times n}$, the algorithm first tests that $A$ is faithful and in standard form,
and terminates if either of these hypotheses does not hold. Let
\[
    A(u_1,\ldots,u_n)
    =   \begin{pmatrix}
                a_{11}u_1  &   a_{12}u_2  &   \cdots  &   a_{1n}u_n  \\
                a_{21}u_1  &   a_{22}u_2  &   \cdots  &   a_{2n}u_n
            \end{pmatrix}
\]
so that $A(1,\ldots,1) = A$, and let $D_{pq}(u_1,\ldots,u_n) =d_{pq}u_pu_q$ denote the minor of $A(u_1,\ldots,u_n)$
corresponding to columns $p$ and $q$. For each $1\leq i,j\leq n$ with $d_{ij} > 0$, the denominator
$\prod_{k\neq i,j} (d_{ij} - d_{ik} - d_{jk})(d_{ij} + d_{ik} + d_{jk})$ of the corresponding term in
Equation~\eqref{eq:Gam0} with the substitution $c_{pq} = d_{pq}$ for each $p,q$ is evaluated. If the denominator
is nonzero, then the term is computed directly from the matrix with the above substitutions. If the denominator
vanishes, then the term is computed by substituting $c_{pq} = D_{pq}(u_1,\ldots,u_n)$ for each $p,q$. The sum of
the resulting terms is combined into a single rational fraction of the form in Equation~\eqref{eq:Gam0Combined}
in the indeterminates $u_1,\ldots,u_n$ with many of the singularities in that expression already removed.
The remaining singularities can be removed by factoring and cancelling or by polynomial division of the numerator
by the principal ideal generated by the product of factors of the denominator that vanish when each $u_i = 1$.

This method has been implemented on \emph{Mathematica} \cite{Mathematica} and is available
from the authors upon request. Unlike the algorithm to compute the complete Hilbert series described in
Section~\ref{subsec:Algorithm}, it has the benefit of not being as sensitive to the size of the
entries of $A$. For weight matrices with no degeneracies, it is simply arithmetic and hence fast,
and the computational expense grows with the number of degeneracies and only slowly with the $n$
and the size of the weights. It has successfully computed $\gamma_0(A)$ for weight matrices as
large as $2\times 10$ with multiple degeneracies in a matter of minutes.


\bibliographystyle{amsplain}
\bibliography{HHS-T2}

\end{document}